\newtheorem{thm}{Theorem}[section]
\newtheorem{prop}[thm]{Proposition}
\newtheorem{lem}[thm]{Lemma}
\newtheorem{cor}[thm]{Corollary}
\newtheorem*{thm*}{Theorem}
\newtheorem*{cor*}{Corollary}
\theoremstyle{definition}
\newtheorem{example}[thm]{Example}
\newtheorem{rem}[thm]{Remark}
\theoremstyle{remark}
\newcommand{\Fix}{\operatorname{Fix}}
\numberwithin{equation}{section}
\newcommand{\GL}{\operatorname{{\mathbf GL}}}
\newcommand{\tr}{\operatorname{tr}}
\newcommand{\cF}{{\mathcal F}}
\newcommand{\cH}{{\mathcal H}}
\newcommand{\cI}{{\mathcal I}}
\newcommand{\cL}{{\mathcal L}}
\newcommand{\cM}{{\mathcal M}}
\newcommand{\cS}{{\mathcal S}}
\newcommand{\fB}{{\mathfrak B}}
\newcommand{\fI}{{\mathfrak I}}
\newcommand{\fV}{{\mathfrak V}}
\newcommand{\fW}{{\mathfrak W}}
\newcommand{\B}{{\mathbb B}}
\newcommand{\C}{{\mathbb C}}
\newcommand{\R}{{\mathbb R}}
\newcommand{\N}{{\mathbb N}}
\newcommand{\M}{{\mathbb M}}
\newcommand{\W}{{\mathbb W}}
\newcommand{\D}{{\mathbb D}}
\newcommand{\balpha}{\bm{\alpha}}
\newcommand{\bbeta}{\bm{\beta}}
\newcommand{\bw}{\mathbf{w}}
\def\mcc{M\raise.5ex\hbox{c}C}
\def\mccarthy{M\raise.5ex\hbox{c}Carthy}
\begin{document}

\title{On fixed points of self maps of the free ball}

\author{Eli Shamovich}
\address{Dept. of Pure Mathematics, University of Waterloo, Waterloo, ON, Canada}
\email{eshamovi@uwaterloo.ca}


\thanks{}

\begin{abstract}
In this paper, we study the structure of the fixed point sets of noncommutative self maps of the free ball. We show that for such a map that fixes the origin the fixed point set on every level is the intersection of the ball with a linear subspace. We provide an application for the completely isometric isomorphism problem of multiplier algebras of noncommutative complete Pick spaces.
\end{abstract}

\maketitle


\section{Introduction} \label{sec:intro}

Function theory and hyperbolic geometry of $\B_d$, the unit ball of $\C^d$, were studied extensively throughout the years, see for example \cite{Rudin} and \cite{Gar07}. The fact that $\B_d$ is the unit ball of a finite dimensional Hilbert space leads to a generalization of many classical results from the unit disc setting, like  the Schwarz lemma and the Julia-Caratheodory-Wolff theorem (see \cite{ERS08}, \cite{GoeRei84}, \cite{Her63}, \cite{ReiSho05}, and \cite{Rudin} for details). 

In operator algebra theory the Drury-Arveson space $\cH^2_d$, the model space for commuting row contractions, is a reproducing kernel Hilbert space of analytic function on $\B_d$ with reproducing kernel $k_d(z,w) = \frac{1}{1 - \langle z, w \rangle}$ (see \cite{Arv98} and \cite{Drury}). This space is a complete Pick space, i.e., the multipliers of the Drury-Arveson space admit an interpolation theorem for matrix valued functions generalizing the classical Nevanlina-Pick interpolation theorem in the unit disc. Let us write $\cM_d$ for the algebra of multipliers on $\cH_d^2$, it is a maximal abelian WOT-closed operator subalgebra of $B(\cH_d^2)$ generated by the operators $M_{z_j}$ of multiplication by coordinate functions. In \cite[Theorem 8.2]{AM_Book} it was shown that the Drury-Arveson space for $d = \infty$ is the universal complete Pick space, namely, if $\cH$ is a separable complete Pick reproducing kernel Hilbert space on a set $X$ with kernel $k$, then there exists an embedding $b \colon X \to \B_{\infty}$ and a nowhere vanishing function $\delta$ on $X$, such that $k(x,y) = \overline{\delta(x)}\delta(y) k_{\infty}(b(x),b(y))$ and $\cH$ is isometrically embedded in $\delta \cH_d^2$. 

Let $V \subset \B_d$ be an analytic subvariety of $\B_d$ cut out by functions in $\cM_d$. We can associate to it a reproducing kernel Hilbert space $\cH_V$ spanned by kernel functions $k_d(\cdot,w)$ for $w \in V$. This space turns out to be a complete Pick space and the multiplier algebra $\cM_V$ of $\cH_V$ is completely isometrically isomorphic to $\cM_d/\cI_V$, where $\cI_V$ is the WOT-closed ideal of functions vanishing on $V$. It is thus natural to ask to what extent does the algebra $\cM_V$ determine the variety $V$ and vice versa. The isomorphism problem for subvarieties of $\B_d$ cut out by multipliers of the Drury-Arveson space was studied by Davidson, Ramsey and Shalit. In \cite{DRS11} and \cite{DRS15} they studied the algebra $\cM_V$ and its norm closed analog and proved that if $V, W \subset \fB_d$ are subvarieties of $\fB_d$, such that their affine span is all of $\C^d$, then $\cM_V$ is completely isometrically isomorphic to $\cM_W$ if and only if there is an automorphism of $\B_d$ mapping $V$ onto $W$ (see also \cite{SalomonShalit} for a survey and more results on the commutative isomorphism problem). One of the main tools in the proof of the theorem is a theorem that appears both in \cite{Rudin} and \cite{Her63} and states that the fixed point set of a self map of $\B_d$ is the intersection of $\B_d$ with an affine subspace (see also \cite[Theorem 23.2]{GoeRei84} and \cite[Theorem 6.3]{KRS01}).

The noncommutative (nc for short), or free functions were introduced by Taylor in \cite{Tay72frame} and \cite{Tay73}. Taylor's goal was to facilitate noncommutative functional calculus and thus he endeavored  to give topological algebras analogous to the classical Frechet algebras of analytic functions on open domains in $\C^d$. Voiculescu in \cite{Voic85}, \cite{Voic86}, \cite{Voic04} and \cite{Voic10} developed the ideas of Taylor in the context of free probability. Helton, Klep , McCullough and Schweighofer applied noncommutative analysis in order to obtain dimension free relaxation of the LMI containment problem (see \cite{HKM13-relax} and \cite{HKMS15}). Their results were extended and improved upon by Davidson, Dor-On, Shalit and Solel in \cite{DDSS17},  Passer, Shalit and Solel in \cite{PSS17} and Fritz, Netzer and Thom in \cite{FNT17}. For other applications to free real algebraic geometry see for example \cite{HM2004} and \cite{HM2012}. More applications to free probability were provided by Belinschi, Popa and Vinnikov in \cite{BPV13} and Popa and Vinnikov in \cite{PopVin13}. A fundamental book \cite{KVBook} on the properties of nc function was written by Kaliuzhnyi-Verbovetskyi and Vinnikov. The theory of bounded functions on free domains was studied by Agler and \mccarthy{} in \cite{AM00}, \cite{AM15}, \cite{AM15a}, \cite{AM15b}, \cite{AM15c} and \cite{AM15d}. They have obtained interpolation and realization results with applications to $H^{\infty}$ functional calculus on free analogs of polynomial polyhedra. Similar interpolation and realization results were obtained by Ball, Marx and Vinnikov in \cite{BMV15a} and \cite{BMV15b}, where they have also developed the theory of noncommutative reproducing kernels and defined the complete Pick property for such kernels. Muhly and Solel formulated a much more general theory using $W^*$-correspondences in \cite{MS08} (see also \cite{MS13}).

In Section \ref{sec:main} we will prove an analog of the result of Rudin and Herv\'e for the free ball. Automorphisms of the ball were studied by Davidson and Pitts in \cite{DavPitts2}, \mccarthy{} and Timoney in \cite{McT16}, and Popescu in \cite{Popescu10}. More generally self maps of the free ball and quantizations of Cartan domains of type I were studied by Helton, Klep, McCullough and Slinglend in \cite{HKMS09}. They, however, have studied self maps with nice boundary properties and here we make no such assumptions. Automorphisms of quantizations of Cartan domains of type I were also studied in \cite{McT16}. A study of isomorphisms of free LMI domains was carried out by Helton, Klep and McCullough in \cite{HKM11} and by Augat, Helton, Klep and McCullough in \cite{AHKM16}. In our study of self nc-maps of the free ball, we employ the classical techniques of complex geodesics developed by Vesentini in \cite{Ves81} and \cite{Ves82}, Vigu\'e in \cite{Vig84} and \cite{Vig85} and others.

Analogously to the commutative case, the free matrix ball admits a noncommutative reproducing Szego kernel and the associated nc reproducing kernel space is the full Fock space. The fact that the full Fock space is the noncommutative analog of the Drury-Arveson space was observed by Bunce \cite{Bunce}, Frazho \cite{Frazho} and Popescu \cite{Popescu89}, \cite{Popescu91}, \cite{Popescu92} and \cite{Popescu95}. Noncommutative bounded functions on the free ball and its hyperbolic geometry were studied by Popescu in \cite{Popescu06a}, \cite{Popescu06b}, \cite{Popescu09}, \cite{popescu09II}, \cite{Popescu10} and \cite{Popescu10II} and Davidson and Pitts in \cite{DavPitts1} and \cite{DavPitts2}. Noncommutative Nevanlinna-Pick interpolation on the ball was obtained by Arias and Popescu in \cite{AriasPopescu} and Davidson and Pitts in \cite{DavPittsPick} using operator algebraic methods. A more generalHardy algebras in the setting of $W^*$-correspondences were considered by Muhly and Solel in \cite{MS04} and \cite{MS11}. They have also obtained analogs of Nevanlinna-Pick results in this setting.

The noncommutative analog of the isomorphism problem for multiplier algebras of subvarieties of the free ball was formulated by Salomon, Shalit and Shamovich in \cite{3S}, where the completely isometric isomorphism case was studied. In \cite{3S} a generalization of the results of \cite{DRS11} and \cite{DRS15} for homogeneous varieties was provided and it was shown that in the nonhomogeonous case two varieties with completely isometrically isomorphic multiplier algebras are biholomorphic. If $\fV \subset \fB_d$ and $\fW \subset \fB_e$ are two subvarieties with completely isometrically isomorphic multiplier algebras, then there exist two nc maps $f \colon \fB_d \to \fB_e$ and $g \colon \fB_e \to \fB_d$, such that $f \circ g|_{\fW} = \operatorname{id}_{\fW}$ and $g\circ f|_{\fV} =\operatorname{id}_{\fV}$. IF $\fV$ and $\fW$ are homogeneous then \cite[Theorem 8.4]{3S} shows that there exists a $k$ and an automorphism$\varphi$ of $\fB_k$ , such that $\fV, \fW \subset \fB_k$ and $\varphi$ maps $\fV$ onto $\fW$. In the commutative case, as stated above, Davidson, Ramsey and Shalit proved that for completely isometric isomorphism can be realized by an automorphism of the ball without the assumption that the varieties are homogeneous. In Section \ref{sec:application} we will provide as an application of our main results a noncommutative generalization of the result of Davidson, Ramsey and Shalit on the fact that a completely isometric isomorphism of multiplier algebras of two varieties with scalar points, that are embedded in a non-degenerate way in the free ball is implemented via an automorphism of the free ball.

\section{Basic Definitions and Notations} \label{sec:notation}

From this point on $d$ is a positive integer. Let us denote by $\M_d = \sqcup_{n=1}^{\infty} M_n(\C)^{\oplus d}$ and we think of this space as the space of $d$-tuples of matrices of all sizes. Let us write $\fB_d$ for the nc-ball in $\M_d$, namely the set of all $d$-tuples of matrices $X = (X_1,\ldots,X_d)$, such that $\sum_{j=1}^d X_j X_j^* < I$. In other words, $\fB_d(n)$ is the set of all strict $d$-row contractions of size $n \times n$. In fact $\fB_d(n)$ is a Cartan domain of type I for every $n$. To each point $X \in \fB_d(n)$ we associate a completely positive map $\Phi_X(T) = \sum_{j=1}^n X_j T X_j^*$. We will say that a point $X \in \M_d(n)$ is generic if the algebra generated by $X_1,\ldots,X_d$ is all of $M_n(\C)$. There is a natural action of $\GL_n(\C)$ on $\M_d(n)$, given by $S \cdot X = S^{-1} X S = (S^{-1} X_1 S, \ldots, S^{-1} X_d S)$, for all $X \in \M_d(n)$ and $S \in \GL_n(\C)$.

The main objects of study in this paper are nc functions and maps. In this section, we will provide some basic definitions. For properties of nc functions the reader is referred to the works of Agler and \mccarthy{} (cf. \cite{AM15}) and the foundational book \cite{KVBook}. 

By a direct sum of points $X \in \M_d(n)$ and $Y \in \M_d(m)$, we mean $X \oplus Y = \begin{pmatrix} X & 0 \\ 0 & Y \end{pmatrix}$. Let $\Omega \subset \M_d$ be a set closed under direct sums. A function $f \colon \Omega \to \M_1$ is said to be an nc function if the following conditions hold:
\begin{itemize}
\item $f$ is graded, namely $f(\Omega(n)) \subset \M_1(n)$, for every $n \geq 1$;

\item $f$ respects direct sums, i.e., if $X \in \Omega(n)$ and $Y \in \Omega(m)$, then $f(X\oplus Y) = f(X) \oplus f(Y)$;

\item $f$ respects similarities, i.e., if $X \in \Omega(n)$ and $S \in \GL_n(\C)$, such that $S \cdot X = S^{-1} X S \in \Omega(n)$, then $f(S^{-1} X S) = S^{-1} f(X) S$.
\end{itemize}
One can replace the second and third item by a single item that states that $f$ respects intertwiners, but we will not use this property.

If we assume additionally that $\Omega(n)$ is open for every $n$ and we have a point $P = \begin{pmatrix} X & Z \\ 0 & Y \end{pmatrix}$, with $X \in \Omega(n)$ and $Y \in \Omega(m)$, then
\[
f(P) = \begin{pmatrix} f(X) & \Delta f(X,Y)(Z) \\ 0 & f(Y) \end{pmatrix}.
\]
Here $\Delta f(X,Y)$ is the noncommutative difference-differential operator and it is a linear map $\Delta f(X,Y) \colon M_{n,m}(\C)^{\oplus d} \to M_{n,m}(\C)$. The properties of the difference-differential operator are studied thoroughly in \cite{KVBook}. Using this property one gets a rather surprising result that a mere local boundedness assumption ensures analyticity of $f$ on every level as a function of the coordinates of the matrices. Since we will deal with nc-maps from $\fB_d$ to itself, they are thus automatically analytic. Furthermore, for every $X \in \Omega(n)$, $\Delta f(X,X)$ is, in fact, the derivative of $f$ at $X$, when we consider $f$ as an analytic map $f \colon \Omega(n) \to M_n$. The nc-difference differential operator has the following property that will be used several times throughout the paper:
\[
\Delta f(X\oplus Y, X\oplus Y) \left( \begin{pmatrix} P_{11} & P_{12} \\ P_{21} & P_{22} \end{pmatrix}\right) = \begin{pmatrix} \Delta f(X,X)(P_{11}) & \Delta f(X,Y)(P_{12}) \\ \Delta f(Y,X)(P_{21}) & \Delta(Y,Y)(P_{22}) \end{pmatrix}.
\]
In particular, if $\Omega(1) \neq \emptyset$, then for every vector $\balpha \in \Omega(1)$ we have: that:
\[
\Delta f( \balpha^{\oplus n}, \balpha^{\oplus n}(\left( \left\{ \bw_{ij} \right\}_{i,j = 1}^n \right) = \left\{ \Delta f (\balpha, \balpha)(\bw_{ij}) \right\}_{i,j = 1}^n.
\]
Namely, the derivative of $f$ at the point $\balpha^{\oplus n}$ is the ampliation of the derivative of $f$ at $\balpha$, i.e., $\Delta f(\balpha^{\oplus n}, \balpha^{\oplus n}) = \Delta f(\balpha, \balpha) \otimes I_{M_n}$.

A nc-map $f \colon \Omega \to \M_d$ is just a map, such that every coordinate is an nc-function. If we have a self nc-map $f \colon \Omega \to \Omega$, we will write $\Fix(f)$ for the set of fixed points of $f$. Our main focus in this paper will be the set $\Fix(f)$ for a self nc-map of $\fB_d$ that fixes a scalar point.

The theory of bounded nc functions and nc maps on the ball has been studied in relation with operator algebras by Popescu in a series of papers (see for example \cite{Popescu89},\cite{Popescu91},\cite{Popescu06a},\cite{Popescu06b},\cite{Popescu10}), Arias and Popescu in \cite{AriasPopescu}, Davidson and Pitts in \cite{DavPitts1}, \cite{DavPitts2} and \cite{DavPittsPick} and Salomon, Shalit and the author in \cite{3S}. We will provide more details in Section \ref{sec:application}, where we describe an application to the study of operator algebras arising as multipliers of certain noncommutative complete Pick spaces.  

We will also need some results from complex hyperbolic geometry of convex domains in $\C^d$. Here we will briefly recall some basic definitions and results from the theory, for more information, the reader is referred to the excellent books \cite{Aba89}, \cite{FrVe80}, \cite{GoeRei84}, and \cite{Kob98}. Let us denote by $\D$ the unit disc in the complex plane. We will equip $\D$ with the Poincare-Bergman metric, $\rho$, that will make $\D$ a complex hyperbolic space. It is a consequence of the Schwarz-Pick lemma that the isometries of the disc are precisely the holomorphic automorphisms of the disc. Furthermore, every self map of the disc is a contraction with respect to the Poincare-Bergman metric. 

\newcommand{\hol}{\operatorname{Hol}}

Let $U \subset \C^d$ be a bounded domain. Let $V \subset \C^{d^{\prime}}$ be another domain and let us write $\hol(U,V)$ for the set of holomorphic maps from $U$ to $V$. One can define in general many metrics and pseudometrics invariant under the holomorphic automorphisms of $U$. Two such are the Caratheodory and Kobayashi pseudometrics. One defines the Caratheodory pseudometric by
\[
c(z,w) = \sup \{ \rho(f(z),f(w)) \mid f \in \hol(U,\D)\}.
\]
The Kobayashi pseudometric has a bit more complicated definition, that we do not present here, since for bounded convex domains in $\C^n$ those pseudometric are in fact metrics (this is true even if we omit convexity, see for example \cite[Theorem 2.3.14]{Aba89}) and they coincide by a result of Lempert \cite{Lem82}. Since we will only consider the free unit ball we will talk about the Caratheodory metric and assume from now on that $U$ is convex. A geodesic between two points $w_1,w_2 \in U$ with respect to the Caratheodory metric is a holomorphic map $\varphi \colon \D \to U$, such that there exist $z_1,z_2 \in \D$, such that $\varphi(z_1) = w_1$ and $\varphi(z_2) = w_2$ and $\varphi$ is isometric as a map from the unit disc with the Poincare-Bergman metric to $U$ with the Caratheodory metric. If $U$, for example, is the unit ball of some norm $\|\cdot\|$ on $\C^d$, then for every $w \in U$, $c(0,w) = \rho(0,\|w\|)$ and the map $z \mapsto z w$ is a complex geodesic. As in the case of the disc every $f \in \hol(U,U)$ is a contraction with respect to the Caratheodory metric and a holomorphic automorphism of $U$ is an isometry.

In fact the Caratheodry metric is an integrated form of an infinitesimal norm $\gamma(w,v)$ on the tangent space $T_w U$ to $U$ at $w$. One can show that a map $\varphi \in \hol(\D,U)$ is a complex geodesic if and only if there exist two points $z_1,z_2 \in \D$, such that $c(\varphi(z_1),\varphi(z_2)) = \rho(z_1,z_2)$. Similarly, $\varphi$ is a geodesic if there exists $z \in \D$, such that the map $d\varphi \colon T_z \D \to T_{\varphi(z)} U$ is an isometry with respect to the induced norms.

Vigu\'e has shown in \cite{Vig84} and \cite{Vig85} that if we have $f \in \hol(U,U)$, then every two distinct fixed points of $f$ are connected by a complex geodesic consisting  entirely of fixed points of $f$. Alternatively, if $w \in U$ is fixed by $f$ and $v \in T_w U$ is fixed by $df$, then there is a complex geodesic fixed by $f$ through $w$ that is tangent to $v$ at $w$. We will use those results in the next section.

\section{Main Result} \label{sec:main}

A classical result of Rudin \cite[Theorem 8.2.2]{Rudin} and Herv\'e \cite[Theorem 1]{Her63} (see also \cite{GoeRei84} and \cite{KRS01}) asserts that the fixed point set of a holomorphic self map of the unit ball in $\C^n$ is an intersection of an affine subspace with the unit ball. Since the unit ball is homogeneous we may apply an automorphism and assume that $g$ fixes the origin. If $f(0) = 0$, then the fixed point set is an intersection of the ball with a linear subspace and by applying a unitary we may assume that this subspace is defined by the vanishing of some coordinates. The goal of this section is to provide a free analog of this claim. Our first order of business is to show that if $f \colon \fB_d \to \fB_d$ is an nc-map, such that $f(0) = 0$ and $V(1)$ is the subspace of fixed points of $f$ on the first level, then $f$ fixes the points that satisfy the linear relations of $V$ on every level, i.e. if we set $V(n) = V(1) \otimes M_n$, then $V = \sqcup_{n=1}^{\infty} V(n) \subset \Fix(f)$.

\begin{lem} \label{lem:one-dim_fixed}
Let $f \colon \fB_d \to \fB_d$ be an nc-map, such that $f(0) = 0$. If $f$ fixes $(X,0,\ldots,0)$, for some $X \neq 0$, then $f$ fixes all points of the form $(Z,0,\ldots,0)$.
\end{lem}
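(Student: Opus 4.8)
The plan is to reduce the statement to a single fact about the derivative of $f$ at the scalar origin and then spread that fact across all levels using the ampliation formula for the difference-differential operator. Write $e_1 = (1,0,\ldots,0) \in \C^d$, so that under the identification $\M_d(n) = \C^d \otimes M_n$ a point $(Z,0,\ldots,0)$ on level $n$ is exactly $e_1 \otimes Z$; such a point lies in $\fB_d(n)$ precisely when $\|Z\| < 1$. Let $D := \Delta f(0,0) \colon \C^d \to \C^d$ denote the derivative of $f$ at the level-one origin. By the ampliation identity recalled above, the derivative of $f$ at the origin $0 \in \fB_d(n)$ is $D \otimes I_{M_n}$ for every $n$. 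I will also use two inputs: Vigu\'e's theorems, and the fact that $\fB_d(n)$ is the open unit ball of the row-operator norm, hence a bounded convex domain whose complex geodesics through the center $0$ are the linear discs $\zeta \mapsto \zeta v$, with image $\C v \cap \fB_d(n)$.

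First I would show $D e_1 = e_1$. By hypothesis $f(0) = 0$ and $f(X,0,\ldots,0) = (X,0,\ldots,0)$ for some $X \in M_m$, $X \neq 0$, so $0$ and $(X,0,\ldots,0)$ are distinct fixed points on level $m$. By Vigu\'e they are joined by a complex geodesic of fixed points; since it passes through the center it is linear, with image $\{(\zeta X,0,\ldots,0)\}$, so $f(\zeta X,0,\ldots,0) = (\zeta X,0,\ldots,0)$ for all admissible $\zeta$. Differentiating at $\zeta = 0$ gives $(D \otimes I_{M_m})(e_1 \otimes X) = e_1 \otimes X$, that is $(D e_1) \otimes X = e_1 \otimes X$; since $X \neq 0$ this forces $D e_1 = e_1$.

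Next I would conclude on every level. Fix $n$ and $Z \in M_n$ with $\|Z\| < 1$. Then $(D \otimes I_{M_n})(e_1 \otimes Z) = (D e_1) \otimes Z = e_1 \otimes Z$, so the tangent vector $(Z,0,\ldots,0)$ is fixed by the derivative of $f$ at the fixed point $0 \in \fB_d(n)$. By Vigu\'e's tangent statement there is a complex geodesic fixed by $f$, through $0$ and tangent to $(Z,0,\ldots,0)$; being a geodesic through the center its image is $\C\cdot(Z,0,\ldots,0)\cap\fB_d(n)$, which contains $(Z,0,\ldots,0)$ because $\|Z\| < 1$. Hence $f(Z,0,\ldots,0) = (Z,0,\ldots,0)$, which is the assertion.

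The step I expect to be the crux is the geometric one: identifying the complex geodesics through the center of the convex body $\fB_d(n)$ as the linear discs $\zeta \mapsto \zeta v$, and, in the final paragraph, using uniqueness of the complex geodesic with a prescribed center and tangent direction to be sure that the fixed geodesic Vigu\'e produces is this linear disc (so that it actually passes through $(Z,0,\ldots,0)$, and not merely in its direction). This is where convexity and the Lempert--Vesentini theory are essential; granting it, the remainder is routine manipulation of the ampliation identity.
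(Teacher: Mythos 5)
Your reduction to the single identity $\Delta f(0,0)e_1 = e_1$ followed by ampliation is an appealing plan, but the step you yourself flag as the crux --- that a complex geodesic of $\fB_d(n)$ through the center in a prescribed direction must be the linear disc --- is false for $n \geq 2$, and this breaks both halves of the argument. The ball $\fB_d(n)$ is the unit ball of the row operator norm, which is not strictly convex, and the linear disc $\zeta \mapsto \zeta v/\|v\|$ is the \emph{unique} complex geodesic through $0$ in the direction $v$ only when $v/\|v\|$ is a complex extreme point of the closed ball; for $v = (Z,0,\ldots,0)$ this forces $Z/\|Z\|$ to be a coisometry. Concretely, for $Z = \diag(1,\tfrac12)$ the map $\zeta \mapsto \bigl(\diag\bigl(\zeta,\tfrac12(\zeta+\zeta^2)\bigr),0,\ldots,0\bigr)$ is a non-linear complex geodesic of $\fB_d(2)$ through $0$ tangent to $(Z,0,\ldots,0)$ (compose with the functional $X \mapsto (X_1)_{11}$ to see it is isometric). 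Consequently, in your first step the Vigu\'e geodesic joining $0$ to $(X,0,\ldots,0)$ on level $m$ need not be linear unless $X$ is a scalar multiple of a unitary, so differentiating it at the origin does not yield $\Delta f(0,0)e_1 = e_1$; and in your second step the fixed geodesic tangent to $(Z,0,\ldots,0)$ need not pass through $(Z,0,\ldots,0)$. The paper is explicit about exactly this obstruction (``the geodesic described above is in general not unique''), and Lemmas \ref{lem:similar_to_coisometry} and \ref{lem:der_fix_coisom} exist precisely to restrict such arguments to coisometric directions.

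Both gaps are repairable, and the repair is essentially the paper's proof. For the first step one reduces to level one, where $\B_d$ \emph{is} strictly convex: if $X$ has a nonzero eigenvalue $\lambda$, upper-triangularizing and using that $f$ respects the block structure shows $f$ fixes the scalar point $(\lambda,0,\ldots,0)$, and Rudin's theorem (or your geodesic argument, valid there) fixes the whole disc $(z,0,\ldots,0)$; if $X$ is nilpotent, a single Jordan block exhibits $e_1$ as an off-diagonal entry fixed by $\Delta f(0,0)$ directly from the difference-differential formula. For the second step one should avoid geodesics at higher levels altogether: once every $(z,0,\ldots,0)$ on level one is fixed, direct sums give all diagonal $Z$, similarity gives all diagonalizable $Z$, and density of diagonalizable matrices together with continuity gives all $Z$.
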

\begin{proof}
Assume there exists a point $\lambda \neq 0$ in the spectrum of $X$. Applying similarities we can transform $X$ into an upper-triangular matrix with $\lambda$ one of the entries on the diagonal. Since $f$ is an nc-map, we conclude that $f(\lambda,0,\ldots,0) = (\lambda,0,\ldots,0)$. By \cite[Theorem 8.2.2]{Rudin} we know that all points of the form $(z,0,\ldots,0)$ are fixed. Taking direct sums and similarities we can conclude that every point of the form $(Z,0,\ldots,0)$ is fixed with $Z$ diagonalizable. However, since diagonalizable matrices are dense in the matrix algebra, we conclude that all points of the form $(Z,0,\ldots,0)$ are fixed.

If the spectrum of $X$ does not contain a non-zero point, then $X$ is nilpotent. Hence $X$ is similar to a direct sum of Jordan blocks. Since $f$ is nc, we may assume that $X$ is a single Jordan block. Since both $0$ and $(X,0\ldots,0)$ are fixed, we get that $\Delta f(0,0)$ fixes $(1,0,\ldots,0)$ and thus the disc $(z,0,\ldots,0)$ is fixed on the first level and we proceed as above.
\end{proof}

An immediate corollary is that if $f$ fixes the points $(z,0,\ldots,0)$, with $z \neq 0$ on the first level, then it fixes all of the points of the form $(Z,0,\ldots,0)$.

The following proposition is one of the main tools that we use to obtain the main result.

\begin{lem} \label{lem:similar_to_coisometry}
If $X \in \fB_d(n)$ is a generic point, then there exists $S \in \GL_n(\C)$, such that $(S^{-1} X S)(S^{-1} X S)^* = r I$, for some $0 < r < 1$.
\end{lem}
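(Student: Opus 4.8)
The plan is to reduce the statement to a Perron--Frobenius type result for the completely positive map $\Phi_X(T) = \sum_{j=1}^d X_j T X_j^*$ associated to $X$. First I would observe that conjugating $X$ by $S \in \GL_n(\C)$ and imposing $(S^{-1}XS)(S^{-1}XS)^* = rI$ is, after writing $P = SS^*$, exactly an eigenvector equation, since
\[
\sum_{j=1}^d (S^{-1}X_j S)(S^{-1}X_j S)^* = S^{-1}\Big(\sum_{j=1}^d X_j\, SS^*\, X_j^*\Big)(S^*)^{-1} = S^{-1}\Phi_X(P)(S^*)^{-1}.
\]
Thus the desired identity holds if and only if $\Phi_X(P) = rP$. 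Since every positive definite $P$ is of the form $SS^*$ with $S = P^{1/2} \in \GL_n(\C)$, it suffices to produce a positive definite matrix $P$ and a scalar $0 < r < 1$ with $\Phi_X(P) = rP$; that is, a positive definite eigenvector of $\Phi_X$ with an eigenvalue in $(0,1)$.

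Next I would produce such an eigenvector via the Perron--Frobenius theory for irreducible positive maps on $M_n(\C)$ (in the form due to Evans and H\o{}egh-Krohn). The hypothesis that $X$ is generic means that $X_1,\ldots,X_d$ generate $M_n(\C)$ as an algebra; in particular they have no nontrivial common invariant subspace (such a subspace would be invariant under the whole generated algebra $M_n(\C)$, which is impossible), and this is precisely the statement that $\Phi_X$ is irreducible. The Perron--Frobenius theorem then guarantees that the spectral radius $r$ of $\Phi_X$ is a simple eigenvalue admitting a strictly positive definite eigenvector $P$, and that $r > 0$.

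Finally I would pin down the range of $r$. Positivity $r > 0$ is part of the irreducible Perron--Frobenius statement. For the upper bound I would use that a positive linear map on a unital $C^*$-algebra attains its norm at the identity, so that the spectral radius satisfies
\[
r \le \|\Phi_X\| = \|\Phi_X(I)\| = \Big\|\sum_{j=1}^d X_j X_j^*\Big\| < 1,
\]
the strict inequality coming from $\sum_{j} X_j X_j^* < I$ in $\fB_d(n)$ together with finite-dimensionality (a positive definite $I - \sum_j X_j X_j^*$ forces the largest eigenvalue of $\sum_j X_j X_j^*$ to be strictly below $1$). Taking $S = P^{1/2}$ then yields the claim.

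The step I expect to be the main obstacle is the careful justification of the Perron--Frobenius input: verifying that genericity translates into irreducibility of $\Phi_X$ in exactly the sense the theorem requires, and in particular that irreducibility upgrades the a priori only positive semidefinite Perron eigenvector to a positive definite one, so that $S = P^{1/2}$ is genuinely invertible. Once this is in place, the conjugation computation and the norm estimate are routine.
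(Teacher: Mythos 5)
Your proposal is correct and follows essentially the same route as the paper: genericity gives irreducibility of $\Phi_X$, the Perron--Frobenius theorem of Evans and H\o{}egh-Krohn produces a positive definite eigenvector $A$ with eigenvalue $r$ equal to the spectral radius, the bound $r<1$ comes from $\|\Phi_X\|=\|\Phi_X(I)\|<1$, and $S=A^{1/2}$ does the job. The only cosmetic difference is that the paper cites Farenick for the equivalence of genericity and irreducibility, whereas you verify it directly via common invariant subspaces.
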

\begin{proof}

By \cite[Theorem 2]{Far96} since $X$ is generic the map $\Phi_X$ is irreducible, hence we can apply \cite[Theorem 2.3]{EH78} to find $r >0$ that is a simple eigenvalue of $\Phi_X$ and a positive-definite $A$, such that $\Phi_X(A) = r A$. Since $X$ is a strict contraction the spectral radius of $\Phi_X$ is strictly less than $1$ and in particular, $r < 1$. Set $S = \sqrt{A}$, the unique positive definite square root of $A$. Multiplying the equality $\Phi_X(A) = r A$ both on the left and on the right by $S^{-1}$ we get
\[
r I = S^{-1} \Phi_X(A) S^{-1} = S^{-1} X S S X^* S^{-1} = S^{-1} X S \left( S^{-1} X S \right)^*.
\]


\end{proof}

Note that since $\fB_d(n)$ is a ball of a norm, we know that for every $X \in \fB_d(n)$, there is a complex geodesic that connects $X$ to $0$, given by $z \mapsto z X/\|X\|$, where the norm is the row norm. By \cite[Theorem A]{Ara81} and \cite[Theorem 3.4]{Lew95} the coisometries, namely points $X \in \M_d(n)$ that satisfy $\Phi_X(I) = I$, are real exposed points of $\fB_d(n)$ and thus, in particular, are complex extreme points. Recall that for a convex domain $U \subset \C^m$, a point $x \in \partial U$ is said to be complex extreme if the only vector $y \in \C^m$ satisfying $x + \Delta y \subset \overline{U}$ is $y = 0$ (see \cite[Section 2.6]{Aba89}). Since not every point on the boundary is a coisometry, the geodesic described above is in general not unique, i.e., there might exist other complex geodesics that connect $0$ to $X$. However, if we have $X \in \fB_d(n)$, such that $X/\|X\|$ is a coisometry, then the above geodesic is unique. We will need the following result:

\begin{lem} \label{lem:der_fix_coisom}
Let $f\colon \fB_d(n) \to \fB_d(n)$ be a holomorphic map, such that $f(0) = 0$. Let $X \in \overline{\fB_d(n)}$ be a coisometry. If $\Delta f(0,0)(X) = X$. Then the complex geodesic $z \mapsto z X$ is fixed by $f$.
\end{lem}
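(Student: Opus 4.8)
The plan is to compose $f$ with the candidate geodesic and recognize the composite as a complex geodesic with the same $1$-jet at the origin, and then conclude by uniqueness. Write $\varphi(z) = zX$ for $z \in \D$. Since $X$ is a coisometry we have $\|X\| = 1$ in the row norm, so $\varphi$ is a complex geodesic of $\fB_d(n)$ through $0$, and it is the unique geodesic through $0$ determined by $X$ precisely because $X/\|X\| = X$ is a coisometry, hence a complex extreme (indeed real exposed, by \cite{Ara81} and \cite{Lew95}) point of $\overline{\fB_d(n)}$. Set $g = f \circ \varphi \colon \D \to \fB_d(n)$. Because $f(0) = 0$ we get $g(0) = 0$, and by the chain rule together with the identification of $\Delta f(0,0)$ with the derivative of $f$ at $0$,
\[
g'(0) = \Delta f(0,0)\big(\varphi'(0)\big) = \Delta f(0,0)(X) = X .
\]

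Next I would show that $g$ is itself a complex geodesic. The domain $\fB_d(n)$ is the unit ball of the row norm, so its infinitesimal Caratheodory norm at the origin is exactly that norm, $\gamma(0,v) = \|v\|$; in particular $\gamma\big(0, g'(0)\big) = \|X\| = 1$, which equals the Poincar\'e-Bergman length of the unit tangent vector $1 \in T_0\D$. Since every holomorphic map $\D \to \fB_d(n)$ contracts the infinitesimal Caratheodory norm, the differential $dg(0) \colon T_0 \D \to T_0\fB_d(n)$, $c \mapsto cX$, is forced to be norm preserving, i.e.\ an isometry onto its image for the induced norms. By the infinitesimal characterization of complex geodesics recalled in Section \ref{sec:notation}, $g$ is therefore a complex geodesic.

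Finally, $g$ and $\varphi$ are two complex geodesics through the origin with $g(0) = \varphi(0) = 0$ and $g'(0) = \varphi'(0) = X$. Since $X$ is a coisometry, the complex geodesic through $0$ in the direction $X$ is unique, so $g = \varphi$; that is, $f(zX) = zX$ for every $z \in \D$, which is exactly the assertion that the complex geodesic $z \mapsto zX$ is fixed by $f$.

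The step I expect to be the main obstacle is the uniqueness invoked in the last paragraph: I am using that a complex geodesic of $\fB_d(n)$ is pinned down by its value and derivative at one interior point once its forward boundary endpoint is a complex extreme point. For a coisometry this is the uniqueness already isolated before the lemma, resting on real-exposedness of coisometries and on the rigidity of geodesics at complex extreme points (Vesentini, Vigu\'e; cf.\ \cite{Ves81}, \cite{Vig84}, \cite{Aba89}); the point to be careful about is that the one-sided data (the direction $X$ at $0$) genuinely forces $\lim_{r\to 1} g(r)$ to be $X$ before quoting the two-point uniqueness. As an alternative route that bypasses the infinitesimal bookkeeping, I could feed $w = 0$ and $v = X$ into Vigu\'e's theorem to obtain directly a complex geodesic of fixed points of $f$ through $0$ tangent to $X$, and then identify its image with $\{zX : z \in \D\}$ by the same coisometry uniqueness.
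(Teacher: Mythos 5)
Your argument is correct in substance but follows a genuinely different route from the paper. The paper proves this lemma by Rudin's support-functional trick: since a coisometry is a (real) exposed point, there is a linear functional $\varphi$ with $\varphi(X)=1$ and $\Re\varphi(Y)<1$ on the rest of the closed ball; the scalar function $z\mapsto \varphi(f(zX))$ has derivative $1$ at $0$, hence equals $z$ by the classical Schwarz lemma, and exposedness then forces $f(rX)=rX$ (a rotation argument handles general $z$). This is elementary and entirely self-contained. You instead run the complex-geodesic machinery: $g=f\circ\varphi$ has the same $1$-jet at $0$ as $\varphi(z)=zX$, is a complex geodesic by the infinitesimal characterization, and must coincide with $\varphi$ by uniqueness of the geodesic in a complex-extreme direction. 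Your first two steps are fine, but the uniqueness you invoke at the end is the load-bearing step and is only asserted; note also that the uniqueness statement the paper records before the lemma is the \emph{two-point} version (the geodesic joining $0$ to an interior point $X$ with $X/\|X\|$ coisometric), whereas you need the \emph{tangential} version (unique geodesic through $0$ with prescribed derivative $X$), and these are not interchangeable without an argument --- in the bidisc, $z\mapsto(z,0)$ and $z\mapsto(z,z^2/2)$ are distinct complex geodesics with the same $1$-jet at $0$, so complex extremality of $X$ is doing all the work. The clean way to close your gap is the Thorp--Whitley/Vesentini strong maximum principle: $h(z)=g(z)/z$ maps $\D$ into $\overline{\fB_d(n)}$ by the Schwarz lemma and takes the complex extreme value $X$ at $0$, hence is constant, giving $g(z)=zX$ directly. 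With that lemma in hand your middle paragraph (that $g$ is a geodesic) becomes unnecessary. The trade-off between the two proofs: the paper's needs $X$ to be exposed and stays at the level of the scalar Schwarz lemma; yours needs only complex extremality and is conceptually closer to the geodesic arguments used elsewhere in the paper, but it imports heavier rigidity results.
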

\begin{proof}
The proof follows the idea of \cite[Theorem 8.2.2]{Rudin}. Since $X$ is an exposed point there exists a support hyperplane $H$ of $\fB_d(n)$, such that $H \cap \overline{\fB_d(n)} = \{X\}$. Normalizing we may assume that there is a linear functional $\varphi$ on $\M_d(n)$, such that $\varphi(X) = 1$ and $\Re \varphi(Y) < 1$, for every $Y \in \overline{\fB_d(n)} \setminus \{X\}$. Now we define a function on the unit disc $g(z) = \varphi(f(z X))$. Taking the derivative at the origin we see that $g^{\prime}(0) = \varphi(\Delta f(0,0)(X)) = 1$ and thus by the Schwarz lemma $g(z) = z$. We conclude that for every $0 < r <1$ we have $\frac{1}{r} \varphi(f(r X)) = 1$ and thus by our assumption on $\varphi$ we get that $f(r X) = r X$. To conclude the proof we note that if $X$ is a coisometry, then so is $e^{i t} X$, for every $t \in \R$. Repeating the argument above we see that for every $z \in \D$, $f(z X) = z X$.
\end{proof}

\newcommand{\Span}{\operatorname{Span}}
\newcommand{\matsp}{\operatorname{mat-span}}

Let $X \in \M_d(n)$ and denote by $\cL_X$ the space of all homogeneous linear polynomials on $d$ variables of degree one satisfied by the coordinates of $X$. Applying a unitary we can always assume that $\cL_X$ is spanned by $z_k,\ldots,z_d$. In \cite{3S} Salomon, Shalit and the author have defined the notion of a matrix span of a set of points $\cS \subset \M_d$. Recall that the matrix span of a set $\cS$ is defined via
\[
\matsp(\cS)(n) = \Span \left\{ \left(I_d \otimes T \right)(X) \mid X \in \cS(n),\, T \in \cL(M_n) \right\}.
\]
Here $\cL(M_n)$ stands for the linear operators on $M_n(\C)$ and for each $T \in \cL(M_n)$ we have $\left( I_d \otimes T\right)(X) = \left(T(X_1), \ldots, T(X_d)\right)$.

\begin{lem} \label{lem:mat-span}
If $\cS = \{X\}$ for $X \in \M_d(n)$, then $\matsp(\cS)(n)$ is precisely the set of points $Z \in M_d(n)$, such that $\cL_X \subset \cL_Z$.
\end{lem}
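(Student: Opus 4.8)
The plan is to prove the two inclusions defining the equality separately. Before starting, I would record a simplifying observation: for a singleton $\cS = \{X\}$ the span in the definition of $\matsp$ is redundant. Indeed the assignment $T \mapsto (T(X_1), \ldots, T(X_d))$ is linear in $T$ and $\cL(M_n)$ is a vector space, so $\{(T(X_1), \ldots, T(X_d)) : T \in \cL(M_n)\}$ is already a linear subspace of $\M_d(n)$ and equals $\matsp(\{X\})(n)$. Consequently, to prove that a point $Z$ lies in $\matsp(\{X\})(n)$ it suffices to produce a \emph{single} operator $T \in \cL(M_n)$ with $T(X_j) = Z_j$ for all $j$.

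The inclusion $\matsp(\{X\})(n) \subseteq \{Z : \cL_X \subseteq \cL_Z\}$ is immediate from linearity: if $Z = (T(X_1), \ldots, T(X_d))$ and $\sum_j c_j z_j \in \cL_X$, then $\sum_j c_j Z_j = T\!\left(\sum_j c_j X_j\right) = 0$, so $\sum_j c_j z_j \in \cL_Z$.

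For the reverse inclusion I would recast the hypothesis in terms of kernels. Let $A, B \colon \C^d \to M_n(\C)$ be the linear maps determined by $A(e_j) = X_j$ and $B(e_j) = Z_j$. Identifying the relation $\sum_j c_j z_j$ with $(c_1, \ldots, c_d)$, we have $\cL_X = \ker A$ and $\cL_Z = \ker B$, so $\cL_X \subseteq \cL_Z$ becomes $\ker A \subseteq \ker B$. The standard factorization fact for linear maps then yields $T_0 \colon \im A \to M_n(\C)$ with $B = T_0 \circ A$; extending $T_0$ arbitrarily across a complement of $\im A = \Span\{X_1, \ldots, X_d\}$ gives $T \in \cL(M_n)$ with $T(X_j) = Z_j$ for every $j$, so $Z \in \matsp(\{X\})(n)$. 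Equivalently, following the normalization noted just before the statement, after a unitary one may assume $\cL_X = \Span\{z_k, \ldots, z_d\}$, so that $X_k = \cdots = X_d = 0$ while $X_1, \ldots, X_{k-1}$ are linearly independent; then $\cL_X \subseteq \cL_Z$ forces $Z_k = \cdots = Z_d = 0$, and linear independence of $X_1, \ldots, X_{k-1}$ lets one define $T$ with the prescribed values $T(X_j) = Z_j$.

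This argument is elementary linear algebra, so I do not expect a genuine obstacle; the only points needing care are the bookkeeping that identifies the relation space $\cL_X$ with $\ker A$, and the observation that, for a single point, $\matsp(\{X\})(n)$ is already a subspace so that one operator $T$ suffices.
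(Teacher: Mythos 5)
Your proof is correct and follows essentially the same route as the paper: the forward inclusion by linearity of $T$, and the reverse inclusion by normalizing so that $\cL_X = \Span\{z_k,\ldots,z_d\}$ and using linear independence of $X_1,\ldots,X_{k-1}$ to prescribe $T(X_j) = Z_j$. Your coordinate-free phrasing via $\ker A \subseteq \ker B$ and the factorization lemma, together with the explicit remark that the span in the definition of $\matsp$ is redundant for a singleton, only makes more precise what the paper leaves implicit.
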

\begin{proof}
First, let us assume that $\cL_X = \{0\}$, i.e, the coordinates of $X$ are linearly independent. Then applying linear transformations $T \in \cL(M_n)$ coordinate-wise, we can get any point in $\M_d(n)$ and thus we are done. Now assume that $X = \left(X_1,\ldots,X_{k-1},0,\ldots,0 \right)$, with $X_1,\ldots,X_{k-1}$ linearly independent. As in the previous case we can get by applying linear transformations coordinate-wise any point of the form $Z = \left(Z_1,\ldots,Z_{k-1},0,\ldots,0\right)$. In particular, every such point satisfies $\cL_X \subset \cL_Z$. Applying a linear transformation on the coordinates just changes bases in the spaces of linear polynomials and thus the above statement is true for any $X$.

To prove the converse inclusion observe that if $Z \in \matsp(\cS)(n)$, then there exist a linear transformation $T \in \cL(M_n)$, such that 
\[
Z = \left( I_d \otimes T \right)(X) = \left(T(X_1),\ldots,T(X_d)\right).
\]
Hence if $p = \sum_{j=1}^d \alpha_j z_j \in \cL_X$, then by linearity $p(Z) = T(p(X)) = 0$ and thus $p \in \cL_Z$.

\end{proof}

\begin{prop} \label{prop:fixed_level_1}
If $f$ is an nc self-map of $\fB_d$, such that $f$ fixes the subspace of points of the form $(z_1,\ldots,z_{k-1},0,\ldots,0)$ in $\fB_d(1)$, for $k > 2$, then $f$ fixes all points of the form $(Z_1,\ldots,Z_{k-1},0,\ldots,0)$.
\end{prop}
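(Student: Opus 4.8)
The plan is to follow the strategy of Lemma~\ref{lem:one-dim_fixed}, but now with Lemmas~\ref{lem:similar_to_coisometry} and \ref{lem:der_fix_coisom} doing the work on the higher levels, and to use the hypothesis $k>2$ precisely to secure a dense supply of generic points. Throughout I use that $f$ restricts on each level to a holomorphic self-map $f\colon\fB_d(n)\to\fB_d(n)$, so that Lemma~\ref{lem:der_fix_coisom} is applicable.

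First I would record what fixing the scalar subspace says about the derivative at the origin. Since $f$ fixes every scalar point $(z_1,\ldots,z_{k-1},0,\ldots,0)$ in an open neighborhood of $0$ in the subspace $W=\spn\{e_1,\ldots,e_{k-1}\}\subset\C^d$, the restriction of $f$ to $W\cap\fB_d(1)$ is the identity, so the level-one derivative $A:=\Delta f(0,0)$ satisfies $A|_W=\id_W$. By the ampliation property of the difference-differential operator stated in the excerpt (applied with $\balpha=0$, so that $0^{\oplus n}=0\in\fB_d(n)$), the derivative of $f$ at $0\in\fB_d(n)$ equals $A\otimes I_{M_n}$. Consequently, for any $W'=(W_1,\ldots,W_{k-1},0,\ldots,0)\in\M_d(n)$, writing $W'=\sum_{j<k}e_j\otimes W_j\in W\otimes M_n(\C)$ gives $\Delta f(0,0)(W')=(A\otimes I_{M_n})(W')=W'$; that is, the derivative at the origin fixes every matrix tuple supported on the first $k-1$ coordinates.

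Next I would prove the claim for generic points. Let $X=(X_1,\ldots,X_{k-1},0,\ldots,0)\in\fB_d(n)$ be generic. By Lemma~\ref{lem:similar_to_coisometry} there exist $S\in\GL_n(\C)$ and $0<r<1$ with $(S^{-1}XS)(S^{-1}XS)^*=rI$. Set $Y=S^{-1}XS$, which again has the form $(Y_1,\ldots,Y_{k-1},0,\ldots,0)$ since similarity preserves the vanishing coordinates, and put $\widehat Y=Y/\sqrt r$, a coisometry in $\overline{\fB_d(n)}$ of the same form. By the previous paragraph $\Delta f(0,0)(\widehat Y)=\widehat Y$, so Lemma~\ref{lem:der_fix_coisom} shows the geodesic $z\mapsto z\widehat Y$ is fixed by $f$. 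Evaluating at $z=\sqrt r\in(0,1)\subset\D$ yields $f(Y)=Y$, and since $f$ respects similarities this forces $f(X)=X$.

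Finally I would pass from generic points to all points of the required form by density, and this is the step where $k>2$ is essential: because there are at least two free coordinates $X_1,\ldots,X_{k-1}$ (as $k-1\ge 2$), the tuples generating all of $M_n(\C)$ form a Zariski-open dense subset of $M_n(\C)^{k-1}$, so the generic points of the form $(X_1,\ldots,X_{k-1},0,\ldots,0)$ are dense in the (Euclidean-open) set of all such points inside $\fB_d(n)$. Since $f$ is continuous on each level and $\Fix(f)$ is closed, the conclusion extends from the generic tuples to every $(Z_1,\ldots,Z_{k-1},0,\ldots,0)\in\fB_d(n)$. The main obstacle is exactly this density input: for a single nonzero coordinate ($k=2$) there are no generic points on the relevant subspace for $n>1$, so Lemma~\ref{lem:similar_to_coisometry} has nothing to act on and the argument collapses—which is why that case requires the separate treatment of Lemma~\ref{lem:one-dim_fixed}, whereas here the hypothesis $k>2$ is precisely what makes the coisometry normalization available on a dense set.
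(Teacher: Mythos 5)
Your proof is correct and follows essentially the same route as the paper: normalize a generic point to a coisometry via Lemma~\ref{lem:similar_to_coisometry}, use the fact that $\Delta f(0,0)$ fixes tuples supported on the first $k-1$ coordinates together with Lemma~\ref{lem:der_fix_coisom} to fix the geodesic, and finish by density of generic tuples (which is where $k>2$ enters). The only difference is cosmetic: where the paper routes the derivative claim through Lemma~\ref{lem:one-dim_fixed} and the matrix-span lemma of \cite{3S}, you obtain it directly from the ampliation property $\Delta f(0,0)=\Delta f(0,0)|_{\text{level }1}\otimes I_{M_n}$, which is a slightly more self-contained way to reach the same point.
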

\begin{proof}
By Lemma \ref{lem:one-dim_fixed} we know that the points $(Z_1,0,\ldots,0)$, $(0,\ldots,0,Z_{k-1},0,\ldots,0)$ are fixed by $f$. Now \cite[Lemma 8.1]{3S} implies that $\Delta f(0,0)$ fixes the matrix span of these points and thus fixes all points of the form $(Z_1,\ldots,Z_{k-1},0,\ldots,0)$. 
Consider the $n$-th level $\fB_d(n)$. Let $Z = (Z_1,\ldots,Z_{k-1},0,\ldots,0)$ be a point such that $Z/\|Z\|$ is coisometric. As was mentioned there is a unique geodesic passing between $0$ and $Z$ and it is just the disc through the two points. Additionally, the above argument shows that $\Delta f(0,0)(Z) = Z$ and it is the derivative of $f$ at the origin on level $n$. 
Applying Lemma \ref{lem:der_fix_coisom} we get that the geodesic $w \mapsto w Z$ is fixed by $f$. By Lemma \ref{lem:similar_to_coisometry} every generic point of the form $(X_1,\ldots,X_{k-1},0,\ldots,0)$ is similar to such a $Z$ we conclude that every generic point of this form is fixed. Since $k > 2$ the generic points are dense in the set of all points of this form and thus the entire set is fixed.
\end{proof}

This proposition combined with Lemma \ref{lem:one-dim_fixed} gives us the following corollary.
\begin{cor} \label{cor:fixed_subspace}
Let $f \colon \fB_d \to \fB_d$ be an nc-map, such that $f(0) = 0$. Let $V(1)$ be the fixed point set of $f$ on the first level. Set $V(n) \subset \fB_d(n)$ the subspace of matrices, such that their coordinates satisfy the linear relations of $V(1)$ and $V = \sqcup_{n = 1}^{\infty} V(n)$. Then $f$ fixes $V$.
\end{cor}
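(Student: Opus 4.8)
The plan is to reduce the corollary to the two results already in hand, namely Lemma \ref{lem:one-dim_fixed} (together with the corollary recorded immediately after it) and Proposition \ref{prop:fixed_level_1}, after a single preliminary normalization of $V(1)$. First I would pass to the first level and invoke the classical theorem of Rudin and Herv\'e: since $f|_{\fB_d(1)}$ is a holomorphic self-map of the ordinary ball $\B_d$ fixing the origin, its fixed point set $V(1)$ is the intersection of $\B_d$ with a \emph{linear} subspace. Writing $k-1 = \dim_{\C} V(1)$ and conjugating $f$ by a unitary automorphism of $\fB_d$ (arising from a unitary change of variables $z \mapsto Uz$, which fixes $0$, acts as an automorphism on every level, and transports both $\Fix(f)$ and the space of linear relations covariantly), I would arrange that $V(1)$ is cut out by the vanishing of the last $d-k+1$ coordinates, i.e. $V(1) = \{(z_1,\ldots,z_{k-1},0,\ldots,0)\} \cap \fB_d(1)$. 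With this normalization the set $V(n)$ of the statement becomes exactly $\{(Z_1,\ldots,Z_{k-1},0,\ldots,0)\} \cap \fB_d(n)$, so it suffices to show every such tuple is fixed; and since conjugation is harmless, establishing this for the normalized $f$ proves it for the original.

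Next I would split into cases according to the value of $k$. If $k=1$ there is nothing to prove: $V(1) = \{0\}$, so $V(n)$ is the single zero tuple, and because $f$ respects direct sums we have $f(0) = f(0^{\oplus n}) = f(0)^{\oplus n} = 0$ on every level. If $k>2$, the hypothesis of Proposition \ref{prop:fixed_level_1} holds verbatim---$f$ fixes the subspace $\{(z_1,\ldots,z_{k-1},0,\ldots,0)\}$ on the first level---and that proposition yields directly that $f$ fixes every $(Z_1,\ldots,Z_{k-1},0,\ldots,0)$, which is precisely $V$.

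The remaining case $k=2$ is the only delicate point, and it is exactly the case excluded by the hypothesis $k>2$ of Proposition \ref{prop:fixed_level_1}. The obstruction is structural rather than technical: the proof of that proposition rests on the density of generic points, but a single matrix $Z_1$ generates a commutative subalgebra and hence never generates all of $M_n(\C)$ for $n\geq 2$, so there are no generic points among tuples of the form $(Z_1,0,\ldots,0)$ and Lemma \ref{lem:similar_to_coisometry} cannot be applied. This is precisely why Lemma \ref{lem:one-dim_fixed} was proved by the alternate route of reducing to a single Jordan block and using density of diagonalizable matrices. For $k=2$ the set $V(1) = \{(z_1,0,\ldots,0)\} \cap \fB_d(1)$ contains points $(z_1,0,\ldots,0)$ with $z_1 \neq 0$, so the corollary stated immediately after Lemma \ref{lem:one-dim_fixed} applies and shows that $f$ fixes every tuple $(Z_1,0,\ldots,0)$. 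This disposes of the last case, and together the three cases show $f$ fixes all of $V$, as claimed.
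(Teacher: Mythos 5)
Your proof is correct and follows essentially the same route as the paper, which simply states that the corollary follows from combining Proposition \ref{prop:fixed_level_1} with Lemma \ref{lem:one-dim_fixed}; you have filled in exactly that combination, with the unitary normalization of $V(1)$ and the case split on $\dim V(1)$ (using the remark after Lemma \ref{lem:one-dim_fixed} for the one-dimensional case) made explicit.
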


\begin{prop} \label{prop:fixes_lin_independent_point}
Let $f \colon \fB_d \to \fB_d$ be an nc-map, such that $f(0) = 0$. If there exists a generic point $X \in \fB_d(n)$, such that $f(X) = X$, then $f(Z) = Z$, for every $Z \in \fB_d$, such that $\cL_X \subset \cL_Z$. In particular, if the coordinates of $X$ are linearly independent, i.e., $\cL_X = \{0\}$, then $f$ is the identity map.
\end{prop}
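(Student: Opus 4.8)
The plan is to reduce everything to a statement about the derivative of $f$ at the origin. First I would record that $\Delta f(0,0)$ fixes $X$. Since $X$ is generic, Lemma~\ref{lem:similar_to_coisometry} provides $S \in \GL_n(\C)$ with $(S^{-1}XS)(S^{-1}XS)^* = rI$ for some $0 < r < 1$; because conjugation by $S$ preserves the generated algebra, the relations $\cL_X$, and the identity $f(X) = X$, I may replace $X$ by $S^{-1}XS$ and assume that $X/\|X\|$ is a coisometry. Then $0$ and $X$ are two distinct fixed points of the holomorphic self-map $f$ of the bounded convex domain $\fB_d(n)$, so by the theorem of Vigu\'e \cite{Vig84,Vig85} recalled in Section~\ref{sec:notation} they lie on a complex geodesic consisting entirely of fixed points. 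As $X/\|X\|$ is a coisometry, hence complex extreme, this geodesic is the unique one through $0$ and $X$, namely $z \mapsto zX/\|X\|$; differentiating the identity $f(zX/\|X\|) = zX/\|X\|$ at $z = 0$ gives $\Delta f(0,0)(X) = X$ on level $n$.

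Next I would transfer this to the first level. By the ampliation formula at a scalar point, $\Delta f(0,0)$ on level $n$ equals $A \otimes I_{M_n}$, where $A = \Delta f(0,0) \colon \C^d \to \C^d$ is the first-level derivative at the origin. Reading $(A \otimes I_{M_n})(X) = X$ coordinatewise yields $\sum_j (A - I)_{ij} X_j = 0$ for every $i$, so each row of $A - I$, viewed as the coefficient vector of a homogeneous linear form, lies in $\cL_X$. Hence for every first-level vector $v$ in the common zero locus $L = \{ v \in \C^d : \cL_X \subseteq \cL_v \}$ one has $(A-I)v = 0$, i.e. $A$ fixes $L$ pointwise.

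Finally I would lift fixedness back up. Every unit vector $\zeta \in L$ is a coisometry in $\fB_d(1) = \B_d$ and satisfies $A\zeta = \zeta$, so Lemma~\ref{lem:der_fix_coisom} at level $1$ shows that the disc $z \mapsto z\zeta$ is fixed by $f$; letting $\zeta$ range over the unit sphere of $L$ sweeps out $L \cap \B_d$, so the first-level fixed-point set $V(1)$ contains $L \cap \fB_d(1)$. Consequently the linear relations of $V(1)$ vanish on $L \cap \fB_d(1)$, hence on all of $L$, and are therefore contained in $\cL_X$; Corollary~\ref{cor:fixed_subspace} then shows that $f$ fixes the subspace $V$ cut out by the relations of $V(1)$. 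Since every $Z$ with $\cL_X \subseteq \cL_Z$ thus satisfies the relations of $V(1)$, it lies in $V$ and is fixed; the case $\cL_X = \{0\}$ forces $V = \fB_d$, so $f$ is the identity.

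The main obstacle is the very first step, the passage from the interior fixed point $f(X) = X$ to the derivative identity $\Delta f(0,0)(X) = X$. A direct Schwarz-lemma argument of the kind used in Lemma~\ref{lem:der_fix_coisom} does not apply, since $X$ is an interior point rather than a boundary coisometry; it is precisely Vigu\'e's complex-geodesic theorem that makes this step work, and the normalization from Lemma~\ref{lem:similar_to_coisometry} is what guarantees the uniqueness of the geodesic needed to identify it with the straight disc and differentiate.
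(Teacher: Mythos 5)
Your proof is correct, and its crucial step coincides with the paper's: normalize the generic fixed point to a coisometric direction via Lemma~\ref{lem:similar_to_coisometry}, invoke Vigu\'e's theorem to obtain a complex geodesic of fixed points through $0$ and $X$, and use complex extremality of the coisometry to identify that geodesic with the straight disc $z \mapsto z X/\|X\|$, whence $\Delta f(0,0)(X) = X$. Where you genuinely diverge is in propagating this derivative identity to all $Z$ with $\cL_X \subseteq \cL_Z$. The paper stays on level $n$: it cites \cite[Lemma 8.1]{3S} to conclude that $\Delta f(0,0)$ fixes the matrix span of the coisometric point, identifies that span via Lemma~\ref{lem:mat-span}, and then reruns the coisometry--geodesic--density argument of Proposition~\ref{prop:fixed_level_1} at level $n$. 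You instead descend to level $1$: the ampliation formula $\Delta f(0,0) = A \otimes I_{M_n}$ together with the coordinatewise identity $\sum_j (A-I)_{ij} X_j = 0$ shows that the rows of $A - I$ lie in $\cL_X$, so $A$ fixes the common zero locus $L$ of $\cL_X$ pointwise; Lemma~\ref{lem:der_fix_coisom} applied on level $1$ to unit vectors of $L$ then places $L \cap \B_d$ inside the first-level fixed set, and Corollary~\ref{cor:fixed_subspace} finishes the argument. Your route avoids the external matrix-span lemma and reuses the already-established Corollary rather than re-deriving its content inline, which makes the logical dependencies a bit cleaner; the paper's route keeps the whole argument on level $n$ and parallels its proof of Proposition~\ref{prop:fixed_level_1}. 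Both are valid, and your closing remark correctly identifies why Vigu\'e's theorem, rather than a direct Schwarz-lemma argument, is the indispensable ingredient.
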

\begin{proof}
Since $f$ is nc and it fixes $X$, it is forced to fix the similarity orbit of $X$ in the ball, namely $\left(\GL_n(\C) \cdot X \right) \cap \fB_d(n)$. By Lemma \ref{lem:similar_to_coisometry} the similarity orbit contains a point $Y$, such that $Y/\|Y\|$ is a coisometry. Note that $Y$ is also generic and $\cL_Y = \cL_X$, since the action of $\GL_n(\C)$ is linear. By \cite[Corollary 4.2]{Vig84} we have that there exists a complex geodesic consisting of fixed points of $f$ that connects $0$ to $Y$. Since $Y/\|Y\|$ is a coisometry and thus a complex extreme point on the boundary, this geodesic is necessarily $z \mapsto z Y/\|Y\|$. Thus, in particular, the derivative of $f$ at $0$ fixes this line as well. Since $f$ is an nc map, the derivative at $0$ is $\Delta f (0,0) \otimes I_{M_n}$ and it fixes the line $z Y$. By \cite[Lemma 8.1]{3S} we have that it must fix the intersection of the matrix span of $Y$ with the ball. As above we may assume that $Y = (Y_1,\ldots, Y_{k-1},0,\ldots,0)$, with $k > 2$ and $Y_1,\ldots, Y_{k-1}$ linearly independent. By Lemma \ref{lem:mat-span} the matrix span of $Y$ on level $n$ is precisely the set of all points of the form $(Z_1,\ldots,Z_{k-1},0,\ldots,0)$, i.e., $\cL_Y \subset \cL_Z$. Now as in the proof of Proposition \ref{prop:fixed_level_1}, since the $\Delta f(0,0)$ fixes the matrix span of $Y$. Applying again Lemmas \ref{lem:der_fix_coisom} and \ref{lem:similar_to_coisometry} and using the density of the generic points we obtain that all of the points of the form $\left(Z_1,\ldots,Z_{k-1},0,\ldots,0\right)$ are fixed by $f$.

In particular, if the coordinates of $X$ are linearly independent, then the coordinates of $Y$ are linearly independent and it implies that the derivative fixes $\fB_d(n)$. By the free version of Cartan's theorem (see \cite{McT16}, \cite{Popescu10} and \cite{3S}). We conclude that $f$ is the identity.
\end{proof}

\begin{rem}
The above lemma is the noncommutative analog of a claim from \cite{Rudin} that the fixed points of a holomorphic map $f \colon \B_d \to \B_d$, such that $f(0) = 0$ are precisely the fixed points of $D f(0)$, the derivative of $f$ at the origin.
\end{rem}

\begin{lem} \label{lem:jordan}
Let $f \colon \fB_d \to \fB_d$ be an nc-map, such that $f(0) = 0$. If $X \in \fB_d(n)$ is a fixed point of $f$, then the geometric and algebraic multiplicities of $1$ as an eigenvalue of $\Delta f(X,X)$ are the same.
\end{lem}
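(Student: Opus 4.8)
The plan is to show that the derivative $A := \Delta f(X,X)$, regarded as a linear operator on the tangent space $T_X\fB_d(n) \cong M_n(\C)^{\oplus d}$, is power-bounded, and then to invoke the elementary fact that a power-bounded operator on a finite-dimensional space carries no nontrivial Jordan block at any eigenvalue of modulus one. Since $1$ has modulus one, this immediately forces its geometric and algebraic multiplicities to coincide. Recall from Section~\ref{sec:notation} that $\Delta f(X,X)$ is precisely the Fr\'echet derivative at $X$ of the holomorphic self-map $f \colon \fB_d(n) \to \fB_d(n)$, and that $\fB_d(n)$ is a bounded convex domain, so this is an honest statement about a linear endomorphism of $M_n(\C)^{\oplus d}$.

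To establish power-boundedness I would use the infinitesimal Caratheodory norm $\gamma(X,\cdot)$ on $T_X\fB_d(n)$ recalled in Section~\ref{sec:notation}. Because $f$ is a holomorphic self-map of $\fB_d(n)$ with $f(X)=X$, its derivative contracts this infinitesimal metric at $X$: for every tangent vector $v$ one has $\gamma\bigl(X, Av\bigr) \le \gamma\bigl(X, v\bigr)$. This is the infinitesimal form of the contractivity of self-maps, and it follows at once from the definition of $\gamma$ by precomposing the competing maps $h \in \hol(\fB_d(n),\D)$ with $f$. Since $\fB_d(n)$ is bounded, $\gamma(X,\cdot)$ is a genuine norm on the finite-dimensional space $T_X\fB_d(n)$, so the displayed inequality says exactly that $A$ has operator norm at most $1$ with respect to this norm; hence $A^k$ has operator norm at most $1$ for every $k$, and $A$ is power-bounded. (Alternatively, one could bound the iterated derivatives $A^k$, which by the chain rule equal the derivatives at $X$ of the $k$-fold iterates $f\circ\cdots\circ f$, directly by Cauchy's estimates, using that every iterate takes values in the bounded set $\fB_d(n)$ and that a fixed ball around $X$ lies inside $\fB_d(n)$.)

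Finally I would translate power-boundedness into the statement about multiplicities. Suppose, for contradiction, that $1$ admitted a Jordan block of size at least two. Then there would exist tangent vectors $v,w$ with $w \ne 0$, $Aw = w$, and $Av = v + w$. Iterating gives $A^k v = v + k w$, whose $\gamma(X,\cdot)$-norm tends to infinity as $k \to \infty$, contradicting $\sup_k \|A^k\| \le 1$. Hence every Jordan block of $A$ for the eigenvalue $1$ has size one, which is exactly the assertion that the geometric and algebraic multiplicities of $1$ agree.

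The only delicate points are the two inputs from hyperbolic geometry used in the second paragraph: that the derivative of a self-map fixing $X$ contracts the infinitesimal Caratheodory norm, and that this norm is positive definite, so that ``contraction of $\gamma$'' genuinely means ``operator norm $\le 1$''. Both are classical for bounded domains and are already implicit in the material recalled in Section~\ref{sec:notation}; the concluding linear-algebra step is elementary. I therefore expect no substantial obstacle, the main content being the clean identification of $A$ as a contraction of a norm via the Caratheodory metric.
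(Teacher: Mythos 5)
Your proposal is correct and follows essentially the same route as the paper: both arguments establish power-boundedness of $\Delta f(X,X)$ via contractivity with respect to the infinitesimal Caratheodory metric on $T_X\fB_d(n)$ (using that this metric is a genuine norm, equivalent to the row norm, on a bounded domain), and then rule out a nontrivial Jordan block at the eigenvalue $1$ by the polynomial growth of $\|A^k\|$ it would force. The only cosmetic difference is that the paper bounds each $\Delta f_k(X,X)=A^k$ directly as the derivative of the iterate $f_k$, whereas you bound $\|A\|_\gamma\le 1$ once and use submultiplicativity.
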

\begin{proof}
Consider the iterates of $f$, namely $f_1 = f$ and $f_k = f \circ f_{k-1}$. Since $X$ is a fixed point of $f$, it is a fixed point of every $f_k$. Taking the derivative of $f_k$ we find by the chain rule that $\Delta f_k(X,X) = \Delta f(X,X)^k$. Since each $f_k$ is a self map of $\fB_d(n)$, by \cite[Proposition V.1.2]{FrVe80} the differential $\Delta f_k(X,X)$ is contractive with respect to the infinitesimal Caratheodory metric on the tangent space at $X$. By \cite[Corollary 5.8]{Ves82} the infinitesimal Caratheodory metric is equivalent to the row norm on $M_n^d$. Thus there exists a constant $C > 0$, such that for every $k$, $\| \Delta f_k(X,X)\|_{2,2} = \|\Delta f(X,X)^k \|_{2,2} \leq C$, where $\|\cdot\|_{2,2}$ is the operator norm with respect to the Hilbert-Schmidt norm on $\M_n^d$. Now if $\Delta f(X,X)$ would have had a Jordan block for $1$, then the norm of $\Delta f(X,X)^k$ would have increased polynomially, contradicting the uniform boundedness.
\end{proof}

For every $n \in \N$, consider the subspace $V(n) \subset \fB_d(n)$, consisting of all the points of the form $(Z_1,\ldots,Z_{k-1},0,\ldots,0)$. Assume that $f \colon \fB_d \to \fB_d$ is an nc-map, such that $V \subset \Fix(f)$. On the $n$-th level let $X \in V(n)$ and consider the tangent spaces at $X$ to $V(n)$ and to $\fB_d(n)$. We have an exact sequence $0 \to T_X V(n) \to T_X \fB_d(n) \to N_X V(n) \to 0$. Since both tangent bundles are trivial this sequence splits. Since $f$ fixes $V$ and thus, in particular, $X$, the differential of $f$ at $X$ has the matrix form $d f (X) = \begin{pmatrix} I & \star \\ 0 & \star \end{pmatrix}$. Consider the compression $Q_n(X)$ of $df(X) - I$ to $N_X V(n)$. It is obvious that this defines a matrix valued analytic function on $V(n)$ and thus $q_n(X) = \det Q_n(X)$ is an analytic function on $V(n)$. Note that by Lemma \ref{lem:jordan} for $X_0 \in V(n)$ there exists a tangent vector in $T_{X_0} \fB_d(n)$ that is fixed by $df(X_0)$ that is not tangent to $V(n)$ at $X_0$ if and only if $q_n(X_0) = 0$. Thus if $q_n(X)$ vanishes at some point and is not identically zero, the zeros of $q_n(X)$ are of complex codimension $1$. This leads us to the following claim:

\begin{lem} \label{lem:hypersurface}
Let $f \colon \fB_d \to \fB_d$ be an nc-map, such that $f(0) = 0$. Let $V(1)$ be precisely the set of fixed points of $f$ on the first level and assume that it is not a singleton.  Let $V(n) \subset \M_d(n)$ be the subspace of points satisfying the same linear relations on the coordinates as $V(1)$. Assume that there exist $X \in V(n)$ and $Y \in V(m)$ and $Z \in M_{n,m}(\C)^{\oplus d}$, such that $P = \begin{pmatrix} X & Z \\ 0 & Y \end{pmatrix} \notin V(n + m)$ is fixed by $f$. Then the set of points $W \in V(n+m)$, such that there exists a vector not tangent to $V(n+m)$ that is fixed by $\Delta f(W,W)$ is of complex codimension $1$ in $V(n+m)$.
\end{lem}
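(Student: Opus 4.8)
The plan is to recognize that the set described in the statement is exactly the zero locus of the analytic function $q_{n+m}$ built in the paragraph preceding the lemma, and then to verify the two things that the codimension-one conclusion rests on: that $q_{n+m}$ vanishes somewhere on $V(n+m)$ and that it is not identically zero. Indeed, by Lemma \ref{lem:jordan} a point $W \in V(n+m)$ admits a $\Delta f(W,W)$-fixed vector not tangent to $V(n+m)$ precisely when $q_{n+m}(W) = 0$, so once both facts are in hand the analyticity of $q_{n+m}$ together with the observation already recorded---that the zeros of a non-trivial holomorphic function form a set of complex codimension one---finishes the proof.

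For the vanishing I would feed in the hypothesis point $P$. Since $X \in V(n)$ and $Y \in V(m)$ are fixed by $f$ by Corollary \ref{cor:fixed_subspace}, the block-upper-triangular formula for nc functions gives $f(P) = \begin{pmatrix} X & \Delta f(X,Y)(Z) \\ 0 & Y \end{pmatrix}$, and $f(P) = P$ forces $\Delta f(X,Y)(Z) = Z$. I then set $W_0 = X \oplus Y \in V(n+m)$ and take the tangent vector $\xi \in \M_d(n+m)$ supported on the $(1,2)$ off-diagonal block, that block being $Z$. The direct-sum formula for the difference-differential operator immediately yields $\Delta f(W_0,W_0)(\xi) = \xi$. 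The key point here, easy to overlook, is that $\xi$ is genuinely transverse to $V(n+m)$: because $X$ and $Y$ have their last $d-k+1$ coordinates zero while $P \notin V(n+m)$, some coordinate $Z_j$ with $j \geq k$ is nonzero, so $\xi$ is not tangent to $V(n+m)$. Hence $q_{n+m}(W_0) = 0$.

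For the non-vanishing the clean choice is the origin $0 \in V(n+m)$. The ampliation identity $\Delta f(0^{\oplus(n+m)},0^{\oplus(n+m)}) = \Delta f(0,0) \otimes I_{M_{n+m}}$ reduces the fixed-space computation at the origin of level $n+m$ to level one. By the Remark following Proposition \ref{prop:fixes_lin_independent_point}, i.e.\ Rudin's theorem \cite[Theorem 8.2.2]{Rudin}, the fixed points of $f$ on the first level are exactly the fixed points of $\Delta f(0,0)$; since by hypothesis this set is \emph{precisely} $V(1)$, the eigenspace $\ker(\Delta f(0,0) - I)$ equals $V(1)$. Consequently the eigenvalue-one eigenspace of $\Delta f(0,0) \otimes I_{M_{n+m}}$ is $V(1)\otimes M_{n+m} = V(n+m)$, so no fixed vector at the origin is transverse to $V(n+m)$ and $q_{n+m}(0) \neq 0$.

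The step I expect to require the most care is this last one: it is exactly here that the hypothesis that $V(1)$ be \emph{precisely} the first-level fixed set is indispensable (and, implicitly, the semisimplicity of Lemma \ref{lem:jordan} at level one, which is what forbids the eigenspace from being larger than the tangent space to the fixed set). I would also confirm that $V(n+m)\cap\fB_d(n+m)$ is connected---it is the intersection of a linear subspace with a convex ball---so that vanishing at $W_0$ together with non-vanishing at $0$ genuinely forces the zero locus of $q_{n+m}$ to be a hypersurface of complex codimension one in $V(n+m)$, which is the assertion.
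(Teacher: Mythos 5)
Your proposal is correct and follows essentially the same route as the paper: you produce the zero of $q_{n+m}$ at $X\oplus Y$ via the off-diagonal block $Z$ and the direct-sum formula for $\Delta f$, and you rule out $q_{n+m}(0)=0$ by the ampliation identity together with the hypothesis that $V(1)$ is exactly the first-level fixed set. Your explicit remarks on the connectedness of $V(n+m)\cap\fB_d(n+m)$ and on the role of Lemma \ref{lem:jordan} in identifying the set with the zero locus of $q_{n+m}$ are left implicit in the paper but are exactly the intended justification.
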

\begin{proof}
By Corollary \ref{cor:fixed_subspace} we know that $V \subset \Fix(f)$. We may assume as above that $V(n)$ is the subspace of points of the form $(Z_1,\ldots,Z_{k-1},0,\ldots,0)$. Since $P$ is fixed by $f$ we conclude that $\Delta f(X,Y)(Z) = Z$. By the properties of the nc difference-differential operator we have that:
\[
\Delta f( X \oplus Y, X \oplus Y)\left( \begin{pmatrix} 0 & Z \\ 0 & 0 \end{pmatrix} \right) = \begin{pmatrix} 0 & Z \\ 0 & 0 \end{pmatrix}.
\]
If we assume that $P \notin V(n+m)$ this would imply that $Z_j \neq 0$ for some $j \geq k$. Hence we have a point $X\oplus Y \in V(n+m)$, such that the vector $\begin{pmatrix} 0 & Z \\ 0 & 0 \end{pmatrix}$ is not tangent to $V(n+m)$ at that point, but is fixed by the derivative of $f$ at $X \oplus Y$. As we have discussed above we can conclude that $q_{n+m}(X \oplus Y) = 0$. On the other hand $q_{n+m}(0) \neq 0$, since the derivative at $0$ of $f$ on the $n+m$-th level is just the ampliation of $\Delta f(0,0)$ and by our assumption the dimension of the kernel of $Q_{n+m}(0)$ is precisely $(n+m)^2 (k-1)$ and that is the dimension of $V(n+m)$. Thus the zeros of $q_{n+m}$ is of complex codimension $1$ and again by Lemma \ref{lem:jordan} we know that at each of those points we have a vector fixed by the derivative, that is not tangent to $V(n+m)$.
\end{proof}

\begin{prop} \label{prop:k>2}
Let $V$ be as above and assume that $k > 2$ and that the fixed points of $f$ on the first level are precisely $V(1)$. Then $V = \Fix(f)$.
\end{prop}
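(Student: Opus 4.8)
The plan is to establish the nontrivial inclusion $\Fix(f) \subseteq V$, the reverse being Corollary \ref{cor:fixed_subspace}. I would argue by contradiction: suppose $f$ has a fixed point outside $V$ and pick one, $P$, on the lowest level $N$ with $\Fix(f)(N) \not\subseteq V(N)$. Since the hypothesis says the level-one fixed points are exactly $V(1)$, necessarily $N \geq 2$. The endgame in all cases is an observation I would isolate first: if $f$ fixes a \emph{generic} point $P'' \notin V$, then by Proposition \ref{prop:fixes_lin_independent_point} it fixes every $Z$ with $\cL_{P''}\subseteq \cL_Z$; but $P''\notin V$ forces $\spn\{z_k,\dots,z_d\}\not\subseteq \cL_{P''}$, so the level-one slice $\{z : \cL_{P''}\subseteq \cL_z\}$ is strictly larger than $V(1)$ and contains a level-one fixed point outside $V(1)$ — a contradiction. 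Thus it suffices to manufacture a generic fixed point outside $V$.

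If $P$ is already generic we are done by the previous paragraph. Otherwise the coordinates of $P$ generate a proper subalgebra of $M_N(\C)$, so by Burnside's theorem they share a nontrivial common invariant subspace; since $f$ respects similarities and $V(N)$ is similarity-invariant, I may conjugate $P$ into the block form $\begin{pmatrix} X & Z \\ 0 & Y \end{pmatrix}$ with $X \in \fB_d(n)$, $Y \in \fB_d(m)$, $n+m=N$, $n,m\geq 1$. The nc block formula yields $f(X)=X$, $f(Y)=Y$ and $\Delta f(X,Y)(Z)=Z$; minimality of $N$ forces $X\in V(n)$, $Y\in V(m)$, while $P\notin V$ means $Z$ has a nonzero coordinate of index $\geq k$. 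This is exactly the input of Lemma \ref{lem:hypersurface}.

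Before invoking that lemma I would pad the level upward. Choosing $\ell \geq 1$ with $N+\ell \geq 3$ and setting $Y' = Y \oplus 0_\ell \in V(m+\ell)$ and $Z'=(Z\ 0)$, the direct-sum behaviour of the difference-differential operator gives $\Delta f(X,Y')(Z')=Z'$, so $P' = \begin{pmatrix} X & Z' \\ 0 & Y' \end{pmatrix}$ is again a fixed point outside $V$, now on a level $N' = N+\ell \geq 3$. Lemma \ref{lem:hypersurface} then says the zero set of the analytic function $q_{N'}$ on $V(N')$ is a nonempty hypersurface. The purpose of the padding is a dimension count: on $V(N') = M_{N'}(\C)^{k-1}$ the non-generic locus is the union over $1\leq j\leq N'-1$ of tuples admitting a common $j$-dimensional invariant subspace, whose complex codimension is at least $(k-2)\,j(N'-j) \geq (k-2)(N'-1) \geq 2$ once $k>2$ and $N'\geq 3$. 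Hence the codimension-one hypersurface $\{q_{N'}=0\}$ must contain a generic point $W$.

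Finally I would run the tangential mechanism at $W$. Since $W\in V(N')\subseteq \Fix(f)$ and $q_{N'}(W)=0$, Lemma \ref{lem:jordan} supplies a vector $v\in T_W\fB_d(N')$ fixed by $df(W)$ and not tangent to the linear space $V(N')$; by Vigué's theorem there is a complex geodesic of fixed points through $W$ tangent to $v$. As $v$ is transverse to $V(N')$ and genericity is an open condition, points of this geodesic near $W$ are generic fixed points lying outside $V$, and the first paragraph closes the argument. The main obstacle is precisely the step that plants a generic point on $\{q_{N'}=0\}$: a codimension-one zero set could a priori hide entirely inside the non-generic locus (which can itself be a hypersurface when $N'=2$), and it is the combination of the hypothesis $k>2$ with the freedom to pad the level beyond $2$ that rules this out.
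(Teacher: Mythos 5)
Your argument is correct and follows essentially the same route as the paper: rule out generic fixed points outside $V$ via Proposition \ref{prop:fixes_lin_independent_point}, reduce a non-generic fixed point to the block form feeding Lemma \ref{lem:hypersurface}, and combine the codimension count for the non-generic locus with Lemma \ref{lem:jordan} and Vigu\'e's theorem to manufacture a generic fixed point off $V$, yielding a contradiction. The only differences are organizational: you run a minimal-level counterexample where the paper inducts on the number of Jordan--H\"older constituents, and you pad uniformly to level $\geq 3$ where the paper instead splits into the cases $k>3$ or $n+m>2$ versus the special case $k=3$, $n=m=1$ --- these are equivalent.
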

\begin{proof}
First, note that by Corollary \ref{prop:fixes_lin_independent_point} there are no generic fixed points outside $V(n)$ for any $n$, for otherwise if $X \notin V(n)$ is a fixed generic point, then we have that every $Y$, such that $\cL_X \subset \cL_Y$ is fixed, in particular, there are such scalar points contradicting our assumption.

If we have a non-generic fixed point we can consider its Jordan-H\"{o}lder constituents that are generic and they are all fixed since $f$ is nc. Thus each of them is in $V$. Let us first consider the case of two constituents. Let $P = \begin{pmatrix} X & Z \\ 0 & Y \end{pmatrix} \in \fB_d(n+m)$ be a fixed non generic point with $X$ and $Y$ both generic and thus in $V(n)$ and $V(m)$, respectively. Now by Lemma \ref{lem:hypersurface} we have that the points where we have a vector that is not tangent to $V(n+m)$ and is fixed by the derivative is of codimension $1$ in $V(n+m)$. We note that in case $ k > 3$ or $k = 3$ and $n+m > 2$, then the non-generic points are of codimension greater than $1$ in $V(n+m)$ and thus there is a generic point $W \in V(n+m)$, such that $q_{n+m}(W) = 0$ (in fact there are many such). So there is a vector that is not tangent to $V(n+m)$ and is fixed by the derivative at $W$, applying \cite[Theorem 4.1]{Vig85} we get a geodesic in the direction of the vector from $W$ that consists of fixed points of $f$. Since the original vector was not tangent to $V(n+m)$ the geodesic leaves $V(n+m)$ and since the generic points are open in $\fB_d(n+m)$ we know that the geodesic will initially stay in the generic points. Conclude that there are generic points not in $V(n+m)$ fixed by $f$ and we reach a contradiction as above. If $k = 3$ and $n = m = 1$, then we can take the direct sum of $X \oplus Y $ with itself and consider the following tangent vector:
\[
\begin{pmatrix} 0 & 0 & 0 & Z \\ 0 & 0 & 0 & 0 \\ 0 & 0 & 0 & 0 \\ 0 & 0 & 0 & 0  \end{pmatrix}.
\]

To complete the proof note that the fact that $X$ and $Y$ are generic is used only to deduce that they are in $V$. So we can proceed by induction on the number of Jordan-H\"{o}lder constituents. We know the result for one and two constituents. If we have $r$ constituents, then we consider the block with $r - 1$ constituents as $X$ and apply the induction hypothesis to obtain that $X \in V$ and now we are back in the case described above.
\end{proof}

\begin{thm} \label{thm:fixed_points_self_map_of_the_ball}
Let $f \colon \fB_d \to \fB_d$ be an nc map and assume that $f(0) = 0$. If $f$ fixes precisely the intersection of the subspace $V(1)$ with $\fB_d(1)$, then the fixed points of $f$ are precisely all the points that satisfy the linear relations of $V(1)$.
\end{thm}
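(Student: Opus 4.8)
The plan is to prove both inclusions. After a unitary I may assume $V(1)$ is cut out by $z_k=\cdots=z_d=0$, so that $V(n)$ consists of the tuples $(Z_1,\dots,Z_{k-1},0,\dots,0)$ and $\dim_{\C}V(1)=k-1$. The inclusion $V\subseteq\Fix(f)$ is exactly Corollary \ref{cor:fixed_subspace}, so everything reduces to $\Fix(f)\subseteq V$, and when $k>2$ this is Proposition \ref{prop:k>2}. The real content is therefore the two degenerate ranges $k=1$ and $k=2$, which are precisely the cases where $V(n)$ contains \emph{no} generic points for $n\ge 2$ (a single matrix, resp.\ the zero tuple, never generates $M_n(\C)$); hence the density-of-generic-points mechanism driving Proposition \ref{prop:k>2} is unavailable and each needs a separate argument.

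For $k=1$ we have $V(n)=\{0\}$ for all $n$, and I must show $\Fix(f)=\{0\}$. By \cite[Theorem 8.2.2]{Rudin} the first-level fixed set equals $\Fix(\Delta f(0,0))\cap\fB_d(1)$, so the hypothesis forces $1\notin\sigma(\Delta f(0,0))$. I then induct on $n$: a fixed $X\in\fB_d(n)$ that is generic forces, by Proposition \ref{prop:fixes_lin_independent_point}, $f$ to fix every scalar point in the common zero locus of $\cL_X$; since $0$ is the only scalar fixed point this locus is $\{0\}$, so $\cL_X$ is everything and $X=0$. If $X$ is not generic it is similar to $\begin{pmatrix}A & C\\ 0 & B\end{pmatrix}$ with $A,B$ fixed of smaller size, whence $A=B=0$ by induction; the block formula for the difference-differential operator gives $\Delta f(0_{n_1},0_{n_2})=\Delta f(0,0)\otimes I$, so fixedness reads $(\Delta f(0,0)\otimes I)(C)=C$, and $1\notin\sigma(\Delta f(0,0))$ forces $C=0$.

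For $k=2$ the line $V(1)=\C e_1$ satisfies $V(n)=\{(Z_1,0,\dots,0)\}$. I first record that every \emph{generic} fixed point lies in $V$: by Proposition \ref{prop:fixes_lin_independent_point} such a $W$ fixes all $Z$ with $\cL_W\subseteq\cL_Z$, so the zero locus of $\cL_W$ must sit inside the line of scalar fixed points, forcing $\cL_W=\cL_{V(1)}$ and, $W$ being generic, $W$ to be a single scalar on the line. Hence the Jordan--H\"older constituents of any fixed point lie in $V$, so an arbitrary fixed point is similar to an upper-triangular tuple with diagonal entries scalars on the line. If it is not in $V$, then passing to a contiguous $2\times2$ subquotient straddling a nonzero transverse entry (such subquotients are again fixed; in the extremal corner case of size $\ge 3$ one first doubles up as in the $k=3$, $n=m=1$ step of Proposition \ref{prop:k>2}) yields a level-two fixed point $P=\begin{pmatrix}\alpha & C\\ 0 & \beta\end{pmatrix}$ with $\alpha,\beta\in V(1)$, $C$ having a nonzero component transverse to $\C e_1$, and $\Delta f(\alpha,\beta)(C)=C$.

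The heart of the matter, and the step I expect to be the main obstacle, is ruling out such a transverse fixed vector. I identify $\Delta f(\alpha,\beta)$ with the $(1,2)$ block of $\Delta f(\alpha\oplus\beta,\alpha\oplus\beta)=Df(\alpha\oplus\beta)$; since $\alpha\oplus\beta\in V$ is fixed, Lemma \ref{lem:jordan} makes the eigenvalue $1$ of this derivative, hence of $\Delta f(\alpha,\beta)$, semisimple, so a generalized transverse fixed vector would be an honest one. The plan is to show the compression of $\Delta f(\alpha,\beta)$ to the transverse directions is a \emph{strict} contraction, which is incompatible with $1$ lying in its spectrum. The delicate point is that the pointwise Carath\'eodory contractivity of $Df(\alpha\oplus\beta)$ gives only a non-strict estimate, since a fixed transverse vector is a \emph{neutral} direction and neutral directions are permitted for a contraction. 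Strictness must instead come from a two-variable argument: the transverse matrix entries of $\Delta f(\alpha,\beta)$ extend to holomorphic functions of $(\alpha,\beta)\in V(1)\times V(1)\cong\D^2$ bounded by $1$ (this is where ball-preservation, in the guise of complete contractivity of $f$ together with a von Neumann--type bound on the elementary operator $Y\mapsto\Delta f(\alpha,\beta)(Y)$, enters), whose diagonal restriction $\alpha=\beta=\mu$ computes $Df(\mu)\vert_{\perp}$. The first-level hypothesis and Lemma \ref{lem:jordan} force these diagonal values to omit the eigenvalue $1$, and the maximum principle on $\D^2$ then propagates strictness to the whole bidisc, so $1\notin\sigma(\Delta f(\alpha,\beta)\vert_{\perp})$ and no transverse fixed vector exists. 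This contradiction gives $\Fix(f)\subseteq V$ for $k=2$, completing the proof.
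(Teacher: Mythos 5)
Your overall skeleton (Corollary \ref{cor:fixed_subspace} for one inclusion, Proposition \ref{prop:k>2} for $\dim V(1)\ge 2$, separate treatment of $k=1$ and $k=2$) matches the paper, and your $k=1$ argument is essentially the paper's Case 1, if anything slightly cleaner since you dispose of the whole off-diagonal block $C$ at once via $\Delta f(0_{n_1},0_{n_2})=\Delta f(0,0)\otimes I$. For $k=2$ you take a genuinely different route: the paper normalizes a level-two fixed point by explicit similarities (Cases 2.1--2.2), handles the residual case of equal-modulus distinct eigenvalues by the codimension-one argument of Lemma \ref{lem:hypersurface} combined with Vigu\'e's geodesic theorem (Case 2.3), and then spends all of Case 3 on an induction in $n$ with further similarity gymnastics; you instead propose a two-variable Schwarz/maximum-principle rigidity argument for the holomorphic family $(\alpha,\beta)\mapsto \Delta f(\alpha,\beta)\vert_\perp$, propagating the absence of the eigenvalue $1$ from the diagonal to the whole bidisc. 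That idea is attractive and, properly executed, could replace a large part of the paper's case analysis. But as written it has two gaps.

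First, the reduction to a level-two point with \emph{scalar} diagonal is not justified. After the induction on $n$, the only surviving transverse entry of an upper-triangular fixed point is the $(1,n)$ corner, and for $n\ge 3$ the index pair $(1,n)$ is not contiguous: there is no invariant-subspace quotient isolating rows and columns $1$ and $n$, so you do not obtain a fixed $2\times 2$ point $\begin{pmatrix}\alpha & C\\ 0&\beta\end{pmatrix}$ with $\Delta f(\alpha,\beta)(C)=C$. What you actually get from the block decomposition is $\Delta f(\alpha,B)(Z)=Z$ with $B\in V(n-1)$ a \emph{matrix} point, and $\Delta f(\alpha,B)$ mixes the columns of $Z$ through $B_1$; the ``doubling up'' trick from Proposition \ref{prop:k>2} addresses genericity of the ambient space, not this. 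This reduction is precisely what the paper's Case 3 (and its fallback to Lemma \ref{lem:hypersurface} when eigenvalues of $X_1$ collide) is devoted to, so it cannot be waved away. Second, the uniform bound feeding the maximum principle is asserted rather than proved, and in the form stated (``matrix entries bounded by $1$'') it is not the statement you need: the rigidity step $\langle Tv,v\rangle=1\Rightarrow Tv=v$ requires a contraction bound for $T(\alpha,\beta)\vert_\perp$ in a \emph{fixed Hilbert norm} uniformly over the parameter domain. Such a bound does hold --- one can see it by testing $f$ on points $\begin{pmatrix}\alpha & sC\\ 0 & B\end{pmatrix}$ with $C$ transverse and comparing the $(1,1)$ corners of $\sum Q_jQ_j^*$ and $\sum f(Q)_jf(Q)_j^*$ --- but the resulting estimate is weighted by $(I-A_1A_1^*)^{1/2}$ and collapses to the flat norm only because the first block is scalar; none of this is in your write-up. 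With both points repaired (running the maximum principle over $\D\times\bigl(V(n-1)\cap\fB_d(n-1)\bigr)$ and anchoring at the origin, where $\Delta f(0,0)\vert_\perp$ has no eigenvalue $1$), your strategy does yield the theorem, and would be a nice alternative to the paper's argument; as it stands, the $k=2$ case is incomplete.
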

\begin{proof}
As above let us write $V$ for the set of all points on all levels that satisfy the linear relations of $V(1)$. Corollary \ref{cor:fixed_subspace} asserts that $V$ is fixed by $f$. By Propositions \ref{prop:k>2}, if the dimension of $V(1)$ is at least $2$ we are done. Hence, we need to prove the theorem for the cases when $0$ is the unique fixed point on the first level and when the fixed points on the first level are a disc. 

\subsection*{Case 1: Unique fixed point} Let us first deal with the case of a unique fixed point on the first level. Let us assume first that $d > 1$, then we cannot have generic fixed points by Proposition \ref{prop:fixes_lin_independent_point}. If we have a non-generic fixed point then again by Proposition \ref{prop:fixes_lin_independent_point} the Jordan-H\"{o}lder constituents must be $0$, hence we can conclude that the point is similar to a point with upper triangular coordinates with zeros on the diagonal. If $n=2$, then such a point is of the form: $\begin{pmatrix} 0 & v \\ 0 & 0 \end{pmatrix}$. If it is fixed then $\Delta f (0,0)(v) = v$, but this is a contradiction, since on the first level, we have only $0$ fixed and the derivative has exactly the same fixed points as the function. Now we proceed by induction on $n$. Given a fixed point, we can write it as $\begin{pmatrix} 0 & v \\ 0 & Y \end{pmatrix}$, where $Y$ is a fixed point of size $n-1$ and thus by the induction hypothesis is $0$. Similarly, if we isolate the upper block only we know that it is zero as well, thus the only possible non zero entry of every coordinate is the upper right one. Conjugating by permutation matrix we can move the upper right entry to be second from the left in the first row and thus we are back in the two dimensional case that we have solved.

In case $d = 1$ we still know that the spectrum of the fixed matrix must contain only $0$, i.e., the matrix is nilpotent. We can thus assume that the matrix is a Jordan block and apply the Schwarz lemma to get the result.

\subsection*{Case 2: Fixed point set is a disc, n=2} Now for the case that $k=2$, i.e., when the fixed point set on the first level is a disc. We may assume that $d \geq 2$, otherwise the map is the identity. The same arguments as above show that if $Z$ is a fixed point then the coordinates of $Z$ have simultaneous upper triangularization and the diagonals of the coordinates $Z_2,\ldots,Z_d$ are zero.

Let us assume that $n=2$ and also assume that $f(X) = X$, where 
\[
X = \left( \begin{pmatrix} \alpha_1 & w_1 \\ 0 & \beta_1 \end{pmatrix}, \begin{pmatrix} 0 & w_2 \\ 0 & 0 \end{pmatrix},\ldots,\begin{pmatrix} 0 & w_d \\ 0 & 0 \end{pmatrix}\right).
\]
We suppose towards contradiction that for at least one $j=2,\ldots,d$ we have $w_j \neq 0$. To simplify notations let us write $\balpha = \left( \alpha_1,0,\ldots,0\right)$, $\bbeta = \left( \beta_1,0,\ldots,0\right)$ and $\bw = \left(w_1,\ldots,w_d \right)$. Hence we can write $X = \begin{pmatrix} \balpha & \bw \\ 0 & \bbeta \end{pmatrix}$. We will divide the proof of this case into subcases.

\subsection*{Case 2.1: The first coordinate has a double eigenvalue}  Assume that $\alpha_1 = \beta_1$, therefore
\[
f(X) = \begin{pmatrix} \balpha & \Delta f (\balpha,\balpha)(\bw) \\ 0 & \balpha \end{pmatrix}.
\]
We conclude that the derivative of $f$ at $\balpha$ fixes $\bw$, but this is impossible since it will add additional fixed points on the first level contradicting our assumption.

\subsection*{Case 2.2: The first coordinate has two eigenvalues of distinct magnitudes} Next, we will assume that $\alpha_1 \neq \beta_1$. First, note that by conjugating by the matrix $E_t = \begin{pmatrix} 1 & t \\ 0 & 1 \end{pmatrix}$ we get:
\[
E_t^{-1} X E_t = \left( \begin{pmatrix} \alpha_1 & w_1 + t(\alpha_1 - \beta_1) \\ 0 & \beta_1 \end{pmatrix}, \begin{pmatrix} 0 & w_2 \\ 0 & 0 \end{pmatrix},\ldots,\begin{pmatrix} 0 & w_d \\ 0 & 0 \end{pmatrix}\right).
\]
Hence if we choose $t = \frac{-w_1}{\alpha_1 - \beta_1}$ we can annihilate $w_1$, hence we will assume from now on that $w_1 = 0$. Consider conjugating by $S_t = \begin{pmatrix} t^{-1} & 0 \\ 0 & t \end{pmatrix}$. We will get $S^{-1} X S = \begin{pmatrix} \balpha & t^2 \bw \\ 0 & \bbeta \end{pmatrix}$. Therefore, if $|\alpha| < |\beta|$, then we can choose $t > 0$, such that $t^4 = \frac{|\beta|^2 - |\alpha|^2}{|w_2|^2 + \cdots + |w_d|^2}$. Conjugating by $S_t$ we will get that $X X^* = |\beta|^2 I$ and thus there is a unique geodesic connecting it to $0$ given by $\frac{z}{|\beta|} X$. In particular, it implies that $\Delta f(0,0)$ fixes $X$ and thus by the direct sum property of the nc difference-differential operator $\Delta f(0,0)(\bw) = \bw$ and that is a contradiction.

Now assume that $|\alpha| > |\beta|$ and consider a new nc-map $g(Z) = f(Z^T)^T$, here superscript $T$ stands for transpose and we apply it coordinate-wise. It is straightforward to check that $g$ is indeed an nc-map. Now note that on the first level $g$ and $f$ agree and that the transpose of every fixed point of $f$ is a fixed point of $g$ and vice versa. In particular, $0$ is a fixed point of $g$ and so is $X^T = \begin{pmatrix} \balpha & 0 \\ \bw & \bbeta \end{pmatrix}$. The same argument as above shows that there exists a $t$, such that $(S_t^1 X^T S_t)(S_t^{-1} X^T S_t)^* = |\alpha|^2 I$. We conclude again that $\Delta g(0,0)(\bw) = \bw$, but since $f$ and $g$ agree on the first level and thus have the same fixed points we obtain a contradiction again.

To summarize the above discussion we have shown that if $X$ is a fixed point of $f$ on the second level and $X_1$ has either two identical eigenvalues or two eigenvalues of distinct magnitudes, then $X \in V(2)$. 

\subsection*{Cases 2.3: First coordinates has two distinct eigenvalues of the same magnitude} We are left with the case that $|\alpha| = |\beta|$ and $\alpha \neq \beta$. The set of matrices with two distinct eigenvalues of the same magnitude is not closed. Its closure will contain the scalar points and is contained in the set of all matrices with two eigenvalues of the same magnitude. To see it consider the symmetrization map $\pi \colon \C^2 \to \C^2$, $\pi(z,w) = (z + w, zw)$. The map $\pi$ is proper by \cite[Section 2.1(f)]{ChDo15}, hence, in particular, closed and thus the image of the set $\cS = \{(z,w) \mid |z| = |w|\}$ is closed. Now we consider the map $\tau \colon M_2(\C) \to \C^2$ given by $\tau(X) = (\tr(X), \det(X))$. The fiber of $\tau$ over a point is the closure of the similarity orbit of a matrix with the given trace and determinant. Given a point $(z,w) \in \C^2$, we consider the polynomial $x^2 - z x + w$, if the discriminant is not zero, then the $\tau^{-1}(\{(z,w)\})$ is precisely the similarity orbit of of the diagonal matrix with $x^2 - z x + w$ as characteristic polynomial. Otherwise, the fiber consists of two orbits, the orbit of the Jordan block and the scalar matrix. Now the set of all matrices with eigenvalues of the same magnitude is given by $\tau^{-1}(\pi(\cS))$ and thus is closed.

By Lemma \ref{lem:hypersurface} we know that the subset of the ball of $2 \times 2$ matrices that has a vector fixed by the derivative not tangent to $V(2)$ is of complex codimension $1$ and in particular, is a closed analytic subvariety. Let us denote this hypersurface by $H$. Note that the scalar points can not have a tangent vector fixed by the derivative that is not tangent to $V(2)$. To see it we apply the fact described in Section \ref{sec:notation} that $\Delta f(\balpha^{\oplus 2}, \balpha^{\oplus 2}) = \Delta f(\balpha,\balpha) \otimes I_{M_2}$.  It implies that the hypersurface $H$ can only intersect the set of points with distinct eigenvalues of the same magnitude, so there must be points with distinct eigenvalues on $H$ of different magnitudes. Note that the set of points $U \subset \fB_d(2)$, such that the first coordinate has two eigenvalues with different magnitudes is open. Let $Y \in H \cap U$, by \cite[Theorem 4.1]{Vig85} there exists a complex geodesic passing through $Y$, that leaves $V(2)$. Since $U$ is open it has points in $U$ that are not in $V(2)$ contradicting the fact that if the first coordinate of a fixed point has eigenvalues of distinct magnitude, then the point is in $V(2)$.

\subsection*{Case 3: Fixed point set is a disc, induction on n} Now we proceed by induction on $n$ and partition the upper-triangular point $X$ in two different ways:
\[
X = \begin{pmatrix} z & v \\ 0 & Y \end{pmatrix} = \begin{pmatrix} \tilde{Y} & \tilde{v}^T \\ 0 & \tilde{z} \end{pmatrix}.
\]
Thus by the induction hypothesis all the entries of $X_2,\ldots,X_d$ are $0$ except for perhaps the upper right corner. 

Now if $z$ is not in the spectrum of $Y$ we can apply similarity as follows:
\[
\begin{pmatrix} 1 & -v (z- Y)^{-1} \\ 0 & I \end{pmatrix} \begin{pmatrix} z & v \\ 0 & Y \end{pmatrix} \begin{pmatrix} 1 & v (z- Y)^{-1} \\ 0 & I \end{pmatrix} = \begin{pmatrix} z & 0 \\ 0 & Y \end{pmatrix}.
\]
Now if $J = \begin{pmatrix} 0 & 1 \\ 1 & 0 \end{pmatrix}$, then since $Y$ is upper triangular we get that:
\[
\begin{pmatrix} J & 0 \\ 0 & I \end{pmatrix} \begin{pmatrix} z & 0 \\ 0 & Y \end{pmatrix} \begin{pmatrix} J & 0 \\ 0 & I \end{pmatrix}  = \begin{pmatrix} y & * \\ 0 & Y^{\prime} \end{pmatrix}.
\]
Here $y_1$ is the left upper corner of $Y$ and $Y^{\prime}$ is the matrix obtained from $Y$ and in particular, its upper left corner is $z$. When we apply the same similarities to a matrix with only the upper right corner non-zero we get that the first similarity keeps it invariant whether from the second we obtain:
\[
\begin{pmatrix} J & 0 \\ 0 & I \end{pmatrix} \begin{pmatrix} 0 & 0 & \cdots & 0 & w\\ 0 & \cdots & & \cdots & 0 \\ \vdots & & \vdots & & \vdots \\ 0 & \cdots & & \cdots & 0 \end{pmatrix} \begin{pmatrix} J & 0 \\ 0 & I \end{pmatrix} = \begin{pmatrix} 0 & 0 & \cdots & 0 & 0\\ 0 & \cdots & \cdots  & 0 & w \\ \vdots & & \vdots & & \vdots \\ 0 & \cdots & & \cdots & 0 \end{pmatrix} 
\]
Since the similar point is fixed and we have again an upper triangular form, but now we can apply the induction hypothesis again on the lower right $(n-1) \times (n-1)$ block to obtain the upper right corner is $0$ as well. Similar argument will apply if $\tilde{z}$ is not in the spectrum of $\tilde{Y}$. If $z = \tilde{z}$, then we can write:
\[
X_1= \begin{pmatrix} z & v & v_d \\ 0 & \widehat{Y} & \tilde{v}^T \\ 0 & 0 & z \end{pmatrix}.
\]
Let us assume now that $z$ is not in the spectrum of $\widehat{Y}$. Proceeding as above on each block separately, we can use similarity to obtain a fixed point with
\[
X_1 = \begin{pmatrix} z & 0 & v_d^{\prime} \\ 0& \widehat{Y} & 0 \\ 0 & 0 & z \end{pmatrix}.
\]
Thus we can apply the same similarity as above and again reduce the problem to the induction hypothesis. Hence we are left with case when the first coordinate has at least two eigenvalues of algebraic multiplicity two or one eigenvalues of algebraic multiplicity three. By Lemma \ref{lem:hypersurface}, we have that the subset of $V(n)$ that has a fixed vector of the derivative that is not tangent to $V(n)$ is a hypersurface. The set of matrices of the type above is the locus of points, where the discriminant of the characteristic polynomial of the first coordinate has at least a double root, and thus of higher codimension. Hence we must have points as described in the first two cases and we are done by the same argument as in the proof of Case 2.3.

\end{proof}

\section{Application to Multipliers Algebras of Noncommutative Complete Pick Spaces} \label{sec:application}

Let $d < \infty$ and $\cF_d$ be the full Fock space on $d$ generators. As shown in \cite{BMV15b} $\cF_d$ is a noncommutative reproducing kernel Hilbert space (nc-RKHS for short) with the noncommutative Szego kernel
\[
K(Z,W)(T) = \sum_{\alpha \in \W_d} Z^{\alpha} T W^{\alpha *}.
\]
Here $\W_d$ is the monoid of words on $d$ letter and for $Z \in \fB_d$, $Z^{\alpha}$ is the evaluation of the monomial defined by the word $\alpha$ on $Z$. In \cite{BMV15b} and \cite{3S} using different techniques, it was shown that $K$ is a complete Pick kernel. Let us write $H^{\infty}(\fB_d)$ for the algebra of bounded nc functions on $\fB_d$. It turns out that $H^{\infty}(\fB_d)$ is completely isometrically isomorphic to the algebra of multipliers of $\cF_d$. By \cite[Corollary 3.6]{3S} this algebra is precisely the WOT-closed algebra considered by Popescu, Arias and Popescu and Davidson and Pitts. Let $\fV \subset \fB_d$ be a subvariety cut out by bounded nc functions. As in the commutative case, described in Section \ref{sec:intro}, one can associate to $\fV$ a nc-RKHS $\cH_{\fV}$ spanned by the kernel functions $K(\cdot,W)$, for $W \in \fV$ and its multiplier algebra $H^{\infty}(\fV)$. In \cite{AriasPopescu} and \cite{DavPittsPick} it is proved that $H^{\infty}(\fV)$ is completely isometrically isomorphic to the algebra of bounded nc function on $\fV$ and to the quotient $H^{\infty}(\fB_d)/\fI_V$, where $\fI_V$ is the WOT-closed ideal of bounded nc functions vanishing on $\fV$ (see \cite{3S} for a proof in the language of nc-functions).

Theorem \ref{thm:fixed_points_self_map_of_the_ball} allows us to resolve a question asked in \cite{3S}. Namely, let $\fV \subset \fB_d$ and $\fW \subset \fB_e$ be subvarieties as above. We assume that the algebras of multipliers $H^{\infty}(\fV)$ and $H^{\infty}(\fW)$ are completely isometrically isomorphic, then by \cite[Theorem 6.12]{3S} we know that there exist nc-maps $f \colon \fB_d \to \fB_e$ and $g \colon \fB_e \to \fB_d$, such that $g \circ f|_{\fV} = \operatorname{id}_{\fV}$ and $f \circ g|_{\fW} = \operatorname{id}_{\fW}$. The following theorem strengthens \cite[Theorem 6.12]{3S} and is a free generalization of \cite[Theorem 4.5]{DRS15} in the case when the varieties have scalar points.

\begin{thm} \label{thm:varieties_with_scalar_points}
Assume that $\fV \subset \fB_d$ and $\fW \subset \fB_e$ are subvarieties, such that $H^{\infty}(\fV)$ and $H^{\infty}(\fW)$ are completely isometrically isomorphic. If  $\fV$ (and thus $\fW$) has a scalar point, then there exists a positive integer $k$ and an automorphism $\varphi$ of $\fB_k$, such that $\fV, \fW \subset \fB_k$ and $\varphi$ maps $\fV$ onto $\fW$.
\end{thm}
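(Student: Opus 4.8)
The plan is to feed the maps furnished by the operator-algebraic hypothesis into the fixed-point analysis of Section \ref{sec:main}: I would pass to the two self-maps $g\circ f$ and $f\circ g$, use Theorem \ref{thm:fixed_points_self_map_of_the_ball} to recognize their fixed-point sets as copies of smaller free balls on which $f$ and $g$ are mutually inverse automorphisms, show that $f$ is linear there via the free Cartan theorem, and finally package the resulting unitary into a single automorphism of a common $\fB_k$.

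First I would invoke \cite[Theorem 6.12]{3S} to obtain nc-maps $f\colon\fB_d\to\fB_e$ and $g\colon\fB_e\to\fB_d$ restricting to mutually inverse biholomorphisms between $\fV$ and $\fW$, so $g\circ f|_{\fV}=\id_{\fV}$ and $f\circ g|_{\fW}=\id_{\fW}$. Since nc-maps are graded, the scalar point of $\fV$ is carried by $f$ to a scalar point of $\fW$, so $\fW$ indeed has a scalar point. Conjugating $\fB_d$ and $\fB_e$ by automorphisms carrying these two scalar points to the respective origins (and replacing $f,g$ by the conjugated maps, which preserves the intertwining relations since automorphisms are completely isometric), I may assume $0\in\fV$, $0\in\fW$ and $f(0)=g(0)=0$.

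Next, set $h=g\circ f\colon\fB_d\to\fB_d$, an nc self-map with $h(0)=0$ and $\fV\subseteq\Fix(h)$. By the remark following Proposition \ref{prop:fixes_lin_independent_point}, the first-level fixed set $\Fix(h)(1)$ equals the eigenspace $V(1)=\Fix(dh(0))\subseteq\C^d$, and Theorem \ref{thm:fixed_points_self_map_of_the_ball} gives $\Fix(h)=V$ with $V(n)=V(1)\otimes M_n$ intersected with the ball. Arguing identically for $h'=f\circ g$ yields $\Fix(h')=W$ with $W(1)\subseteq\C^e$. I would then observe $f(V)\subseteq W$ and $g(W)\subseteq V$: for $v\in V$ one has $(f\circ g)(f(v))=f(h(v))=f(v)$, so $f(v)\in\Fix(f\circ g)=W$. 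Together with $g|_W\circ f|_V=\id_V$ and $f|_V\circ g|_W=\id_W$, this shows $f|_V\colon V\to W$ and $g|_W\colon W\to V$ are mutually inverse nc automorphisms fixing $0$. On the level of derivatives, write $A=df(0)$ and $B=dg(0)$, which are contractions of $\C^d$ and $\C^e$ by the (first-level) Schwarz lemma. Then $V(1)=\Fix(BA)$, $W(1)=\Fix(AB)$, $A$ maps $V(1)$ into $W(1)$ with $BA|_{V(1)}=\id$, and for $v\in V(1)$ the chain $\|v\|=\|BAv\|\le\|Av\|\le\|v\|$ forces $A|_{V(1)}$ to be isometric; since $B|_{W(1)}$ inverts it, $\Theta:=A|_{V(1)}\colon V(1)\to W(1)$ is unitary and $\dim V(1)=\dim W(1)=:r$.

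Finally I would linearize and assemble. Applying unitaries I identify $V\cong\fB_r\cong W$ by taking $V(1)$ and $W(1)$ to $\spn\{e_1,\dots,e_r\}$; the unitary $\Theta$ then induces an nc automorphism $\varphi_0\colon V\to W$, and $\varphi_0^{-1}\circ f|_V$ is an nc self-map of $\fB_r$ fixing $0$ with derivative the identity, hence equals the identity by the free Cartan theorem (\cite{McT16}, \cite{Popescu10}, \cite{3S}). Thus $f|_V=\varphi_0$ is linear, and in particular $\varphi_0(\fV)=f(\fV)=\fW$. Taking $k=\max(d,e)$ and regarding $V(1),W(1)\subseteq\C^k$ via the standard coordinate embeddings, I extend the isometry $\Theta$ to a unitary $U$ of $\C^k$ (possible since both orthogonal complements have dimension $k-r$) and set $\varphi(X)=XU$. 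This is an automorphism of $\fB_k$ with $\fV,\fW\subseteq\fB_k$ and $\varphi|_V=\varphi_0$, so $\varphi(\fV)=\fW$, as required.

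I expect the main obstacle to be the linearization step: namely extracting from Theorem \ref{thm:fixed_points_self_map_of_the_ball} the fact that $\Fix(g\circ f)$ is a genuine copy of $\fB_r$ on which $f$ restricts to an honest ball automorphism, so that the free Cartan theorem can be applied; the accompanying passage from the contractive derivatives $A,B$ to a single unitary $\Theta$, and the bookkeeping that realizes $\Theta$ as the level-one part of the global automorphism $\varphi$, are the other delicate points.
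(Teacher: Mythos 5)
Your proposal is correct, and the first half (obtaining $f,g$ from \cite[Theorem 6.12]{3S}, normalizing the scalar points to the origin, and feeding $g\circ f$ and $f\circ g$ into Theorem \ref{thm:fixed_points_self_map_of_the_ball}) coincides with the paper's proof. From that point on, however, you take a genuinely different route. The paper first invokes \cite[Lemma 8.2]{3S} to replace $\fB_d$ and $\fB_e$ by the matrix spans of $\fV$ and $\fW$ --- this reduction is where the integer $k$ in the statement comes from --- so that, after the reduction, $\Fix(g\circ f)$ contains a set of full matrix span on each level; Theorem \ref{thm:fixed_points_self_map_of_the_ball} then forces $V(1)=\C^k$, hence $g\circ f$ and $f\circ g$ are identities on the whole balls, $d=e=k$, and $f$ is already the desired automorphism. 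No derivative analysis and no Cartan theorem are needed: the obstacle you flag as the "main" one (linearizing $f$ on the fixed-point set) is sidestepped entirely. You instead keep the ambient balls, identify $\Fix(g\circ f)$ and $\Fix(f\circ g)$ with copies of $\fB_r$, linearize $f$ there via the Schwarz-lemma isometry chain $\|v\|=\|BAv\|\le\|Av\|\le\|v\|$ together with the free Cartan theorem, and then extend the resulting unitary to $\fB_{\max(d,e)}$. Both arguments are sound; the paper's is shorter and places $\fV,\fW$ in a minimal common ball (the dimension of the matrix span), whereas yours avoids the matrix-span stabilization lemma at the cost of the extra linearization step and a possibly larger $k$ (which the statement permits). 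One small point to make explicit in your write-up: you use $f(\fV)\subseteq\fW$ and $g(\fW)\subseteq\fV$ to get $f(\fV)=\fW$; this is part of the output of \cite[Theorem 6.12]{3S} rather than a formal consequence of the two identities $g\circ f|_{\fV}=\operatorname{id}_{\fV}$ and $f\circ g|_{\fW}=\operatorname{id}_{\fW}$ alone.
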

\begin{proof}
The first part was in fact proved in \cite[Theorem 8.4]{3S} but we will state the argument here for the sake of completeness. 

As was proved by Popescu in \cite{Popescu10} and Davidson and Pitts in \cite{DavPitts2} the free automorphisms of $\fB_d$ are precisely those that arise from the automorphisms of the first level (see also \cite{3S} for a more elementary proof). Since $\fV$ has a scalar point we can apply an automorphism and assume that this point is $0$. Composition with an automorphism of the ball induces a unitary equivalence on $H^{\infty}(\fV)$ with the multiplier algebra of the image. Hence we may assume that $0 \in \fV$ and also $0 \in \fW$. Since $d < \infty$, by \cite[Lemma 8.2]{3S} we have that for every $\cS \subset \M_d$, there exists a linear subspace $V \subset \C_d$, such that for every sufficiently large $n$ we have $\matsp(\cS)(n) = V \otimes M_n$. Hence there exist subspaces $V(1) \subset \C^d$ and $W(1) \subset \C^e$, such that if we set $V(n) = V \otimes M_n$ and $W(n) = W \otimes M_n$, then $\fV \subset V = \sqcup_{n=1}^{\infty} V(n)$ and $\fW = \sqcup_{n=1}^{\infty} W(n)$. Thus we may assume that $V = \C^d$ and $W = \C^e$ or in other words that there exists $n_0$, such that for every $n \geq n_0$, we have $\matsp(\fV)(n) = \M_d(n)$ and $\matsp(\fW)(n) = \M_e(n)$.

Let $f$ and $g$ be nc-maps $f \colon \fB_d \to \fB_e$ and $g \colon \fB_e \to \fB_d$, such that $g \circ f|_{\fV} = \operatorname{id}_{\fV}$ and $f \circ g|_{\fW} = \operatorname{id}_{\fW}$. Consider the map $h = g \circ f$. By our assumption $h(0) = 0$ and thus the fixed points of $h$ are the points that satisfy the linear relations of the fixed points on level $1$. Since the matrix span of $\fV(n)$ is everything it implies that the fixed points of $h$ on level $n$ don't satisfy any linear relations and thus $h$ is the identity. The same argument applied to $\fW$ shows that $f$ and $g$ are inverse to each other and thus $d = e$ and $f$ is a free automorphism of $\fB_d$.
\end{proof}

\begin{example}
 
To see an example of a subvariety $\fV \subset \fB_d$ that contains $0$, such that $\matsp(\fV)(2) = \M_d(2)$, but the linear span of $\fV(1)$ is one dimensional, consider the variety cut out in $\fB_2$ by the polynomial $XY - YX - \frac{1}{2} Y$. Clearly, $\fV(1)$ is the set of points, such that $y = 0$. However, on the second level, we have the point $P = \left( \begin{pmatrix} 1/2 & 0 \\ 0 & 0 \end{pmatrix},\, \begin{pmatrix} 0 & 1/2 \\ 0 & 0 \end{pmatrix} \right)$ and by Lemma \ref{lem:mat-span} we have that $\matsp(\{P\}) = \M_2(2)$, since the coordinates are linearly independent.

\end{example}

\begin{example}
Another interesting example of a subvariety of the free ball $\fB_2$ is the subvariety cut out by the polynomial $X Y - q Y X$, for some $q \in \C \setminus \{1\}$. , the points on the first level are the axes. However, on the second level we will have many points. For example if $q =2$, then the point $P = \left( \begin{pmatrix} \frac{1}{2} & 0 \\ 0 & \frac{1}{4} \end{pmatrix}, \begin{pmatrix} 0 & \frac{1}{2} \\ 0 & 0 \end{pmatrix} \right)$ is in the variety and thus the matrix span on the second level is everything.
\end{example}

\begin{example}
Not every variety must have scalar points. Consider the variety $\fV \subset \fB_2$ cut out by the polynomials $X^2$, $Y^2$ and $XY + YX - \frac{1}{2}$. Clearly, from the definition, there are no scalar points in the variety. However, on the second level, we note that the condition implies that both coordinates are nilpotent and thus we can always conjugate them to a point of the following form:
\[
P = \left( \begin{pmatrix} 0 & \lambda \\ 0 & 0 \end{pmatrix}, \, \begin{pmatrix} a & b \\ c & -a \end{pmatrix} \right). 
\]
With $\det Y = - a^2 - bc = 0$ Now the vanishing of the third polynomial implies that $\lambda c = \frac{1}{2}$ and thus $b = - 2 \lambda a^2$. Thus for example if we take $a = 0$ and $\lambda = c = \frac{1}{\sqrt{2}}$, then the following point is in $\fV$:
\[
P = \left( \begin{pmatrix} 0 & \frac{1}{\sqrt{2}} \\ 0 & 0 \end{pmatrix}, \, \begin{pmatrix} 0 & 0 \\ \frac{1}{\sqrt{2}} & 0 \end{pmatrix} \right).
\]
This point is generic and in fact, every point in $\fV(2)$ is generic, since $\fV$ is a variety that is the vanishing locus of nc functions and since the $\fV(1) = \emptyset$, then $\fV(2)$ must consist of generic points only.


\end{example}

\begin{rem}
As was mentioned in the introduction the papers \cite{HKMS09} and \cite{McT16} study automorphisms of more general free domains, namely they consider quantizations of Cartan domains of type I. Recall that Cartan domains of type I are precisely the sets of matrices $X \in M_{p,q}(\C)$, such that $X X^* < I$, where the identity is a $p \times p$ matrix. Clearly, if we take $p = 1$ and $q = d$, then we get $\B_d$. The quantization is obtained in the same way as for the free ball.

It seems that Lemma \ref{lem:similar_to_coisometry} does not admit a straightforward generalization to quantizations of general Cartan domains of type I. Understanding fixed points of self maps of quantizations of Cartan domains of type I might allow one to remove the assumption of scalar points in Theorem \ref{thm:varieties_with_scalar_points}.
\end{rem}

\subsection*{Acknowledgments} The author thanks John E. \mccarthy{}, Guy Salomon and Orr Shalit for helpful discussions on the topics of this paper. Part of the work of the author was carried out during his postdoctoral fellowship at the Technion, Haifa.

\bibliographystyle{abbrv}
\bibliography{nc_bibliography}

\def\polhk#1{\setbox0=\hbox{#1}{\ooalign{\hidewidth
  \lower1.5ex\hbox{`}\hidewidth\crcr\unhbox0}}}
\begin{thebibliography}{10}

\bibitem{Aba89}
M.~Abate.
\newblock {\em Iteration theory of holomorphic maps on taut manifolds}.
\newblock Research and Lecture Notes in Mathematics. Complex Analysis and
  Geometry. Mediterranean Press, Rende, 1989.

\bibitem{AM00}
J.~Agler and J.~E. \mccarthy.
\newblock Complete {N}evanlinna-{P}ick kernels.
\newblock {\em J. Funct. Anal.}, 175(1):111--124, 2000.

\bibitem{AM_Book}
J.~Agler and J.~E. \mccarthy.
\newblock {\em Pick Interpolation and {H}ilbert Function Spaces}, volume~44 of
  {\em Graduate Studies in Mathematics}.
\newblock American Mathematical Society, Providence, RI, 2002.

\bibitem{AM15}
J.~Agler and J.~E. \mccarthy.
\newblock Global holomorphic functions in several noncommuting variables.
\newblock {\em Canad. J. Math.}, 67(2):241--285, 2015.

\bibitem{AM15a}
J.~Agler and J.~E. \mccarthy.
\newblock Non-commutative holomorphic functions on operator domains.
\newblock {\em Eur. J. Math.}, 1(4):731--745, 2015.

\bibitem{AM15d}
J.~Agler and J.~E. \mccarthy.
\newblock Pick interpolation for free holomorphic functions.
\newblock {\em Amer. J. Math.}, 137(6):1685--1701, 2015.

\bibitem{AM15c}
J.~Agler and J.~E. \mccarthy.
\newblock The implicit function theorem and free algebraic sets.
\newblock {\em Trans. Amer. Math. Soc.}, 368(5):3157--3175, 2016.

\bibitem{AM15b}
J.~Agler and J.~E. \mccarthy.
\newblock Noncommutative functional calculus.
\newblock {\em J. Anal. Math.}, to appear.

\bibitem{Ara81}
J.~Arazy.
\newblock On the geometry of the unit ball of unitary matrix spaces.
\newblock {\em Integral Equations Operator Theory}, 4(2):151--171, 1981.

\bibitem{AriasPopescu}
A.~Arias and G.~Popescu.
\newblock Noncommutative interpolation and {P}oisson transforms.
\newblock {\em Israel J. Math.}, 115:205--234, 2000.

\bibitem{Arv98}
W.~Arveson.
\newblock Subalgebras of {$C\sp *$}-algebras. {III}. {M}ultivariable operator
  theory.
\newblock {\em Acta Math.}, 181(2):159--228, 1998.

\bibitem{AHKM16}
M.~Augat, J.~W. Helton, I.~Klep, and S.~McCullough.
\newblock Bianalytic maps between free spectrahedra.
\newblock {\em arXiv}, 1604.04952, 2016.

\bibitem{BMV15b}
J.~A. Ball, G.~Marx, and V.~Vinnikov.
\newblock Interpolation and transfer-function realization for the
  noncommutative schur-agler class.
\newblock {\em arXiv}, 1602.00762, 2015.

\bibitem{BMV15a}
J.~A. Ball, G.~Marx, and V.~Vinnikov.
\newblock Noncommutative reproducing kernel {H}ilbert spaces.
\newblock {\em J. Funct. Anal.}, 271(7):1844--1920, 2016.

\bibitem{BPV13}
S.~T. Belinschi, M.~Popa, and V.~Vinnikov.
\newblock On the operator-valued analogues of the semicircle, arcsine and
  {B}ernoulli laws.
\newblock {\em J. Operator Theory}, 70(1):239--258, 2013.

\bibitem{Bunce}
J.~W. Bunce.
\newblock Models for {$n$}-tuples of noncommuting operators.
\newblock {\em J. Funct. Anal.}, 57(1):21--30, 1984.

\bibitem{ChDo15}
D.~Chakrabarti and S.~Gorai.
\newblock Function theory and holomorphic maps on symmetric products of planar
  domains.
\newblock {\em J. Geom. Anal.}, 25(4):2196--2225, 2015.

\bibitem{DDSS17}
K.~R. Davidson, A.~Dor-On, O.~M. Shalit, and B.~Solel.
\newblock Dilations, {I}nclusions of {M}atrix {C}onvex {S}ets, and {C}ompletely
  {P}ositive {M}aps.
\newblock {\em Int. Math. Res. Not. IMRN}, (13):4069--4130, 2017.

\bibitem{DavPitts2}
K.~R. Davidson and D.~R. Pitts.
\newblock The algebraic structure of non-commutative analytic {T}oeplitz
  algebras.
\newblock {\em Math. Ann.}, 311(2):275--303, 1998.

\bibitem{DavPittsPick}
K.~R. Davidson and D.~R. Pitts.
\newblock Nevanlinna-{P}ick interpolation for non-commutative analytic
  {T}oeplitz algebras.
\newblock {\em Integral Equations Operator Theory}, 31(3):321--337, 1998.

\bibitem{DavPitts1}
K.~R. Davidson and D.~R. Pitts.
\newblock Invariant subspaces and hyper-reflexivity for free semigroup
  algebras.
\newblock {\em Proc. London Math. Soc. (3)}, 78(2):401--430, 1999.

\bibitem{DRS11}
K.~R. Davidson, C.~Ramsey, and O.~M. Shalit.
\newblock The isomorphism problem for some universal operator algebras.
\newblock {\em Adv. Math.}, 228(1):167--218, 2011.

\bibitem{DRS15}
K.~R. Davidson, C.~Ramsey, and O.~M. Shalit.
\newblock Operator algebras for analytic varieties.
\newblock {\em Trans. Amer. Math. Soc.}, 367(2):1121--1150, 2015.

\bibitem{Drury}
S.~W. Drury.
\newblock A generalization of von {N}eumann's inequality to the complex ball.
\newblock {\em Proc. Amer. Math. Soc.}, 68(3):300--304, 1978.

\bibitem{ERS08}
M.~Elin, S.~Reich, and D.~Shoikhet.
\newblock A {J}ulia-{C}arath\'{e}odory theorem for hyperbolically monotone
  mappings in the {H}ilbert ball.
\newblock {\em Israel J. Math.}, 164:397--411, 2008.

\bibitem{EH78}
D.~E. Evans and R.~H\o~egh Krohn.
\newblock Spectral properties of positive maps on {$C\sp*$}-algebras.
\newblock {\em J. London Math. Soc. (2)}, 17(2):345--355, 1978.

\bibitem{Far96}
D.~R. Farenick.
\newblock Irreducible positive linear maps on operator algebras.
\newblock {\em Proc. Amer. Math. Soc.}, 124(11):3381--3390, 1996.

\bibitem{FrVe80}
T.~Franzoni and E.~Vesentini.
\newblock {\em Holomorphic maps and invariant distances}, volume~69 of {\em
  Notas de Matem\'atica [Mathematical Notes]}.
\newblock North-Holland Publishing Co., Amsterdam-New York, 1980.

\bibitem{Frazho}
A.~E. Frazho.
\newblock Models for noncommuting operators.
\newblock {\em J. Funct. Anal.}, 48(1):1--11, 1982.

\bibitem{FNT17}
T.~Fritz, T.~Netzer, and A.~Thom.
\newblock Spectrahedral containment and operator systems with
  finite-dimensional realization.
\newblock {\em arXiv}, 1706.05654, 2017.

\bibitem{Gar07}
J.~B. Garnett.
\newblock {\em Bounded Analytic Functions}, volume 236 of {\em Graduate Texts
  in Mathematics}.
\newblock Springer, New York, first edition, 2007.

\bibitem{GoeRei84}
K.~Goebel and S.~Reich.
\newblock {\em Uniform convexity, hyperbolic geometry, and nonexpansive
  mappings}, volume~83 of {\em Monographs and Textbooks in Pure and Applied
  Mathematics}.
\newblock Marcel Dekker, Inc., New York, 1984.

\bibitem{HKM11}
J.~W. Helton, I.~Klep, and S.~McCullough.
\newblock Analytic mappings between noncommutative pencil balls.
\newblock {\em J. Math. Anal. Appl.}, 376(2):407--428, 2011.

\bibitem{HKM13-relax}
J.~W. Helton, I.~Klep, and S.~McCullough.
\newblock The matricial relaxation of a linear matrix inequality.
\newblock {\em Math. Program.}, 138(1-2, Ser. A):401--445, 2013.

\bibitem{HKMS15}
J.~W. Helton, I.~Klep, S.~McCullough, and M.~Schweighofer.
\newblock Dilations, linear matrix inequalities, the matrix cube problem and
  beta distributions.
\newblock {\em arXiv}, 1412.1481, 2015.

\bibitem{HKMS09}
J.~W. Helton, I.~Klep, S.~McCullough, and N.~Slinglend.
\newblock Noncommutative ball maps.
\newblock {\em J. Funct. Anal.}, 257(1):47--87, 2009.

\bibitem{HM2012}
J.~W. Helton and S.~McCullough.
\newblock Every convex free basic semi-algebraic set has an {LMI}
  representation.
\newblock {\em Ann. of Math. (2)}, 176(2):979--1013, 2012.

\bibitem{HM2004}
J.~W. Helton and S.~A. McCullough.
\newblock A {P}ositivstellensatz for non-commutative polynomials.
\newblock {\em Trans. Amer. Math. Soc.}, 356(9):3721--3737 (electronic), 2004.

\bibitem{Her63}
M.~Herv\'e.
\newblock Quelques propri\'et\'es des applications analytiques d'une boule \`a
  {$m$} dimensions dan elle-m\^eme.
\newblock {\em J. Math. Pures Appl. (9)}, 42:117--147, 1963.

\bibitem{KVBook}
D.~S. Kaliuzhnyi-Verbovetskyi and V.~Vinnikov.
\newblock {\em Foundations of Free Noncommutative Function Theory}, volume 199
  of {\em Mathematical Surveys and Monographs}.
\newblock American Mathematical Society, Providence, RI, 2014.

\bibitem{Kob98}
S.~Kobayashi.
\newblock {\em Hyperbolic complex spaces}, volume 318 of {\em Grundlehren der
  Mathematischen Wissenschaften [Fundamental Principles of Mathematical
  Sciences]}.
\newblock Springer-Verlag, Berlin, 1998.

\bibitem{KRS01}
T.~Kuczumow, S.~Reich, and D.~Shoikhet.
\newblock Fixed points of holomorphic mappings: a metric approach.
\newblock In {\em Handbook of metric fixed point theory}, pages 437--515.
  Kluwer Acad. Publ., Dordrecht, 2001.

\bibitem{Lem82}
L.~Lempert.
\newblock Holomorphic retracts and intrinsic metrics in convex domains.
\newblock {\em Anal. Math.}, 8(4):257--261, 1982.

\bibitem{Lew95}
A.~S. Lewis.
\newblock The convex analysis of unitarily invariant matrix functions.
\newblock {\em J. Convex Anal.}, 2(1-2):173--183, 1995.

\bibitem{McT16}
J.~E. \mccarthy{} and R.~M. Timoney.
\newblock Non-commutative automorphisms of bounded non-commutative domains.
\newblock {\em Proc. Roy. Soc. Edinburgh Sect. A}, 146(5):1037--1045, 2016.

\bibitem{MS04}
P.~S. Muhly and B.~Solel.
\newblock Hardy algebras, {$W\sp \ast$}-correspondences and interpolation
  theory.
\newblock {\em Math. Ann.}, 330(2):353--415, 2004.

\bibitem{MS08}
P.~S. Muhly and B.~Solel.
\newblock Schur class operator functions and automorphisms of {H}ardy algebras.
\newblock {\em Doc. Math.}, 13:365--411, 2008.

\bibitem{MS11}
P.~S. Muhly and B.~Solel.
\newblock Representations of {H}ardy algebras: absolute continuity,
  intertwiners, and superharmonic operators.
\newblock {\em Integral Equations Operator Theory}, 70(2):151--203, 2011.

\bibitem{MS13}
P.~S. Muhly and B.~Solel.
\newblock Tensorial function theory: from {B}erezin transforms to {T}aylor's
  {T}aylor series and back.
\newblock {\em Integral Equations Operator Theory}, 76(4):463--508, 2013.

\bibitem{PSS17}
B.~Passer, O.~M. Shalit, and B.~Solel.
\newblock Minimal and maximal matrix convex sets.
\newblock {\em arXiv}, 1609.07908, 2017.

\bibitem{PopVin13}
M.~Popa and V.~Vinnikov.
\newblock Non-commutative functions and the non-commutative free
  {L}\'evy-{H}in\v cin formula.
\newblock {\em Adv. Math.}, 236:131--157, 2013.

\bibitem{Popescu89}
G.~Popescu.
\newblock Isometric dilations for infinite sequences of noncommuting operators.
\newblock {\em Trans. Amer. Math. Soc.}, 316(2):523--536, 1989.

\bibitem{Popescu91}
G.~Popescu.
\newblock von {N}eumann inequality for {$B(H)^n_1$}.
\newblock {\em Math. Scand.}, 68(2):292--304, 1991.

\bibitem{Popescu92}
G.~Popescu.
\newblock On intertwining dilations of sequences of noncommuting operators.
\newblock {\em J. Math. Anal. Appl.}, 167(2):382--402, 1992.

\bibitem{Popescu95}
G.~Popescu.
\newblock Functional calculus for noncommuting operators.
\newblock {\em Michigan Math. J.}, 42(2):345--356, 1995.

\bibitem{Popescu06b}
G.~Popescu.
\newblock Free holomorphic functions on the unit ball of {$B(H)^n$}.
\newblock {\em J. Funct. Anal.}, 241(1):268--333, 2006.

\bibitem{Popescu06a}
G.~Popescu.
\newblock Operator theory on noncommutative varieties.
\newblock {\em Indiana Univ. Math. J.}, 55(2):389--442, 2006.

\bibitem{Popescu09}
G.~Popescu.
\newblock Hyperbolic geometry on noncommutative balls.
\newblock {\em Doc. Math.}, 14:595--651, 2009.

\bibitem{popescu09II}
G.~Popescu.
\newblock Noncommutative hyperbolic geometry on the unit ball of {$B(H)^n$}.
\newblock {\em J. Funct. Anal.}, 256(12):4030--4070, 2009.

\bibitem{Popescu10}
G.~Popescu.
\newblock Free holomorphic automorphisms of the unit ball of {$B(H)^n$}.
\newblock {\em J. Reine Angew. Math.}, 638:119--168, 2010.

\bibitem{Popescu10II}
G.~Popescu.
\newblock Free holomorphic functions on the unit ball of {$B(H)^n$}. {II}.
\newblock {\em J. Funct. Anal.}, 258(5):1513--1578, 2010.

\bibitem{ReiSho05}
S.~Reich and D.~Shoikhet.
\newblock {\em Nonlinear semigroups, fixed points, and geometry of domains in
  {B}anach spaces}.
\newblock Imperial College Press, London, 2005.

\bibitem{Rudin}
W.~Rudin.
\newblock {\em Function Theory in the Unit Ball of {${\bf C}\sp{n}$}}, volume
  241 of {\em Grundlehren der Mathematischen Wissenschaften [Fundamental
  Principles of Mathematical Science]}.
\newblock Springer-Verlag, New York-Berlin, 1980.

\bibitem{SalomonShalit}
G.~Salomon and O.~M. Shalit.
\newblock The isomorphism problem for complete {Pick} algebras: a survey.
\newblock In {\em Operator Theory: Advances and Application, Proceedings of the
  International Workshop on Operator Theory and Applications (IWOTA 2014)},
  pages 168--198. Operator Theory: Advances and Applications, Vol. 255.
  Birkh{\"a}user, Basel, 2016.

\bibitem{3S}
G.~Salomon, O.~M. Shalit, and E.~Shamovich.
\newblock Algebras of bounded noncommutative analytic functions on subvarieties
  of the noncommutative unit ball.
\newblock {\em Trans. Amer. Math. Soc.}, to appear.

\bibitem{Tay72frame}
J.~L. Taylor.
\newblock A general framework for a multi-operator functional calculus.
\newblock {\em Advances in Math.}, 9:183--252, 1972.

\bibitem{Tay73}
J.~L. Taylor.
\newblock Functions of several noncommuting variables.
\newblock {\em Bull. Amer. Math. Soc.}, 79:1--34, 1973.

\bibitem{Ves81}
E.~Vesentini.
\newblock Complex geodesics.
\newblock {\em Compositio Math.}, 44(1-3):375--394, 1981.

\bibitem{Ves82}
E.~Vesentini.
\newblock Invariant distances and invariant differential metrics in locally
  convex spaces.
\newblock In {\em Spectral theory ({W}arsaw, 1977)}, volume~8 of {\em Banach
  Center Publ.}, pages 493--512. PWN, Warsaw, 1982.

\bibitem{Vig84}
J.-P. Vigu\'e.
\newblock G\'eod\'esiques complexes et points fixes d'applications holomorphes.
\newblock {\em Adv. in Math.}, 52(3):241--247, 1984.

\bibitem{Vig85}
J.-P. Vigu\'e.
\newblock Points fixes d'applications holomorphes dans un domaine born\'e
  convexe de {${\bf C}^n$}.
\newblock {\em Trans. Amer. Math. Soc.}, 289(1):345--353, 1985.

\bibitem{Voic85}
D.-V. Voiculescu.
\newblock Symmetries of some reduced free product {$C\sp \ast$}-algebras.
\newblock In {\em Operator algebras and their connections with topology and
  ergodic theory ({B}u\c steni, 1983)}, volume 1132 of {\em Lecture Notes in
  Math.}, pages 556--588. Springer, Berlin, 1985.

\bibitem{Voic86}
D.-V. Voiculescu.
\newblock Addition of certain noncommuting random variables.
\newblock {\em J. Funct. Anal.}, 66(3):323--346, 1986.

\bibitem{Voic04}
D.-V. Voiculescu.
\newblock Free analysis questions. {I}. {D}uality transform for the coalgebra
  of {$\partial\sb {X\colon B}$}.
\newblock {\em Int. Math. Res. Not.}, (16):793--822, 2004.

\bibitem{Voic10}
D.-V. Voiculescu.
\newblock Free analysis questions {II}: the {G}rassmannian completion and the
  series expansions at the origin.
\newblock {\em J. Reine Angew. Math.}, 645:155--236, 2010.

\end{thebibliography}

\end{document}